\newtheorem{theorem}{Theorem}
\newtheorem{corollary}{Corollary}
\newtheorem{lemma}{Lemma}
\newtheorem{definition}{Definition}
\newtheorem{property}{Property}
\newtheorem{remark}{Remark}
\newcommand{\R}{\mathbb{R}}
\title{Unit triangular factorization of the matrix symplectic group}
\author[1,2]{Pengzhan Jin}
\author[1,2]{Yifa Tang}
\author[1,2]{Aiqing Zhu}
\affil[1]{LSEC, ICMSEC, Academy of Mathematics and Systems Science, Chinese Academy of Sciences, Beijing 100190, China}
\affil[2]{School of Mathematical Sciences, University of Chinese Academy of Sciences, Beijing 100049, China}
\date{}
\begin{document}

\maketitle

\begin{abstract}
In this work, we prove that any symplectic matrix can be factored into no more than 9 unit triangular symplectic matrices. This structure-preserving factorization of the symplectic matrices immediately reveals two well-known features that, (i) the determinant of any symplectic matrix is one, (ii) the matrix symplectic group is path connected, as well as a new feature that (iii) all the unit triangular symplectic matrices form a set of generators of the matrix symplectic group. Furthermore, this factorization yields effective methods for the unconstrained parametrization of the matrix symplectic group as well as its structured subsets. The unconstrained parametrization enables us to apply faster and more efficient unconstrained optimization algorithms to the problems with symplectic constraints under certain circumstances.
\end{abstract}

\begin{keywords}
symplectic, symplectic matrix, triangular factorization, positive definite, singular symplectic matrix, generators, parametrization, unconstrained parametrization
\end{keywords}

\section{Introduction}
Denote the $d$-by-$d$ identity matrix by $I_{d}$, let
\begin{equation*}
J\coloneqq\begin{bmatrix} 0 & I_{d} \\ -I_{d} & 0 \end{bmatrix},
\end{equation*}
which is an orthogonal, skew-symmetric real matrix, so that $J^{-1}=J^{T}=-J$.
\begin{definition} \label{def:SP}
A matrix $H\in\R^{2d\times 2d}$ is called symplectic if $H^{T}JH=J$.
\end{definition}
We denote the collection of symplectic matrices by
\begin{equation*}
SP=\{H\in\R^{2d\times 2d}|H^{T}JH=J\},
\end{equation*}
which forms a group, i.e., matrix symplectic group. The group $SP$ is relevant from both the pure mathematical point of view \cite{fomenko1995symplectic}, and the point of view of applications. For instance, symplectic matrices play an important role in classical mechanics
and Hamiltonian dynamical systems \cite{abraham1978foundations,arnol2013mathematical,arnold2007mathematical}. The symplectic matrices also arise in the symplectic integrators which are the preferred methods for the numerical solutions of the differential equations appearing in these physical problems preserving the Hamiltonian structure \cite{feng2010symplectic,hairer2006geometric,sanz2018numerical}. Moreover, it appears in linear control \cite{fassbender2007symplectic,mehrmann1991autonomous}, optimal control \cite{hassibi1999indefinite}, the theory of parametric resonance \cite{iakubovich1975linear}. The applications have motivated the development of computing certain specific factorizations of symplectic matrices \cite{benzi2007iwasawa}.

There have been several modern factorizations of symplectic matrices. \cite{mehrmann1988symplectic} provides the block LDU factorization of symplectic matrices which require the nonsingular left upper block. \cite{dopico2006complementary,dopico2009parametrization} extend the LDU factorization to adapt general symplectic matrices by symplectic interchanges and permutation matrices. There are also QR-like factorization \cite{bunse1986matrix}, polar factorization \cite{higham2006symmetric}, as well as symplectic SVD factorization \cite{xu2003svd}. In addition, \cite{flaschka1991analysis,mackey2003determinant} express the $2d$-by-$2d$ symplectic matrices by the products of at most $4d$ symplectic transvections.

As aforementioned, we briefly present these modern factorizations as follows.
\begin{itemize}
\item
\textbf{Block LDU parametrization.} Before describing the block LDU parametrization, we first show the definition of symplectic interchange.
\begin{definition}
Let $1\leq j\leq d$. The symplectic interchange matrix $\Pi_{j}$ is the $2d$-by-$2d$ matrix obtained by interchanging the columns $j$ and $j+d$ of the $2d$-by-$2d$ identity matrix and multiplying the $j$-th column of the resulting matrix by $-1$. The symplectic interchange matrix $\widetilde{\Pi}_{j}$ is the $2d$-by-$2d$ matrix obtained by interchanging the columns $j$ and $j+d$ of the $2d$-by-$2d$ identity matrix and multiplying the $j+d$-th column of the resulting matrix by $-1$. Notice that $\Pi_{j}^{T}=\widetilde{\Pi}_{j}$.
\end{definition}
With the above definition, there is the block LDU parametrization.
\begin{theorem}
The set of $2d$-by-$2d$ symplectic matrices is
\begin{equation*}
\begin{split}
SP=&\Bigg\{Q\begin{bmatrix} I & 0 \\ C & I \end{bmatrix}\begin{bmatrix} G & 0 \\ 0 & G^{-T} \end{bmatrix}\begin{bmatrix} I & E \\ 0 & I \end{bmatrix}\Bigg|G\in\R^{d\times d}\ nonsingular, \\
&C=C^{T},\ E=E^{T},\ Q\ a\ product\ of\ symplectic\ interchanges\Bigg\}.
\end{split}
\end{equation*}
\end{theorem}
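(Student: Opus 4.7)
The plan is to prove the two inclusions separately, with the reverse inclusion being the substantive direction. For the easier direction, I would verify that each type of factor is symplectic: direct matrix computations show that $\begin{bmatrix} I & E \\ 0 & I \end{bmatrix}^{T} J \begin{bmatrix} I & E \\ 0 & I \end{bmatrix} = J$ precisely when $E = E^{T}$, and similarly for the lower triangular factor when $C = C^{T}$; the block diagonal factor $\operatorname{diag}(G, G^{-T})$ is symplectic by immediate cancellation; and each symplectic interchange $\Pi_{j}$ acts as a $2 \times 2$ rotation in the $(j, j+d)$-plane and is verified symplectic directly. Since $SP$ is a group, any product of these factors lies in $SP$.

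For the reverse inclusion, I first treat the case where the upper-left block $A$ of $H = \begin{bmatrix} A & B \\ C_{0} & D \end{bmatrix} \in SP$ is nonsingular. Setting $G = A$, $C = C_{0} A^{-1}$, and $E = A^{-1} B$, the symplectic identities $A^{T} C_{0} = C_{0}^{T} A$ and $A B^{T} = B A^{T}$, extracted from $H^{T} J H = J$ and its companion $H J H^{T} = J$ respectively, immediately yield $C = C^{T}$ and $E = E^{T}$. The third symplectic identity $A^{T} D - C_{0}^{T} B = I$ then forces the $(2,2)$ block of the candidate product to equal $C_{0} A^{-1} B + A^{-T}$, which matches $D$ after one more use of the symmetry of $A^{T} C_{0}$. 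This establishes the factorization with $Q = I$.

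The main obstacle is the singular case, which I would reduce to the previous case via the following key lemma: for every $H \in SP$, there exists a product $Q$ of symplectic interchanges such that the upper-left block of $Q^{-1} H$ is nonsingular. Left-multiplication by $\Pi_{j}^{T}$ swaps row $j$ of $H$ with row $j+d$ up to a sign, so the upper-left block of $Q^{-1} H$ is obtained from the left half $\begin{bmatrix} A \\ C_{0} \end{bmatrix}$ by selecting, for each $j \in \{1, \dots, d\}$, either row $j$ or row $j+d$. The columns of this left half span a Lagrangian subspace $L \subseteq \R^{2d}$, as read off from the vanishing $(1,1)$ block of $H^{T} J H = J$, so the lemma amounts to asserting that every Lagrangian subspace admits a basis realized as one of these $2^{d}$ ``one-from-each-pair'' row selections. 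To establish this, I would use the fact that the $2^{d}$ coordinate Lagrangian subspaces cover the Lagrangian Grassmannian by open charts; concretely, I would verify that vanishing of all ``one-from-each-pair'' Plücker coordinates of $L$, combined with the linear Pfaffian identities imposed by the Lagrangian condition on the unbalanced Plücker coordinates and the quadratic Plücker relations coupling the two groups, forces all Plücker coordinates of $L$ to vanish, contradicting $\dim L = d$. Once the lemma is in place, applying the nonsingular case to $Q^{-1} H$ and multiplying by $Q$ on the left yields the claimed factorization.
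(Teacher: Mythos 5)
First, note that the paper does not prove this theorem at all: it is quoted from the literature (the block LDU parametrization of Dopico--Johnson) and the reader is referred to \cite{dopico2009parametrization}. The only related machinery the paper develops is Property \ref{pro:LDU}, which is exactly your nonsingular case: your identification $G=A$, $C=C_0A^{-1}$, $E=A^{-1}B$ coincides with (\ref{eq:center_LDU}), and your verification of the symmetry of $C$ and $E$ and of the $(2,2)$ block is correct. Your reduction of the general case is also the right architecture: the first $d$ columns of $H$ form a full-rank symmetric pair, hence span a Lagrangian subspace, and left multiplication by symplectic interchanges realizes precisely the ``one row from each pair $\{j,j+d\}$'' selections up to signs, so everything hinges on the claim that some such selection gives a nonsingular $d\times d$ block.

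That claim --- every Lagrangian subspace of $\R^{2d}$ is transverse to at least one of the $2^{d}$ coordinate Lagrangians --- is the entire substance of the theorem, and your proposal does not actually prove it. The sentence asserting that vanishing of all balanced Pl\"ucker coordinates, together with the linear Lagrangian relations and the quadratic Pl\"ucker relations, forces all Pl\"ucker coordinates to vanish is a nontrivial combinatorial claim for general $d$; it happens to be checkable by hand for $d=2$ (where $p_{13}+p_{24}=0$ and $p_{12}p_{34}-p_{13}p_{24}+p_{14}p_{23}=0$ do the job), but no argument is given for the general case, and this is where all the work lies. A cleaner and fully elementary route is available: set $V=\mathrm{span}(e_{d+1},\dots,e_{2d})$, let $K=L\cap V$, and choose $S\subseteq\{1,\dots,d\}$ with $|S|=\dim K$ such that the projection of $K$ onto $\mathrm{span}(e_{j+d}:j\in S)$ along $\mathrm{span}(e_{j+d}:j\notin S)$ is an isomorphism; then for $v\in L\cap L_S$ with $L_S=\mathrm{span}(\{e_j:j\in S\}\cup\{e_{j+d}:j\notin S\})$, pairing $v$ against $K$ via $\omega$ kills the $e_j$-components ($j\in S$), which puts $v$ in $K$ with vanishing projection, hence $v=0$. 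I would either supply this argument (or the constructive column-by-column argument of \cite{dopico2009parametrization}) or cite the covering of the Lagrangian Grassmannian explicitly; as written, the singular case rests on an unproven lemma.
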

The detailed information of the block LDU parametrization can be found in \cite{dopico2009parametrization}.
\item
\textbf{QR-like factorization.}
\begin{theorem}
The set of $2d$-by-$2d$ symplectic matrices is
\begin{equation*}
SP=\left\{Q\begin{bmatrix} R & Z \\ 0 & R^{-T} \end{bmatrix}\Bigg|\begin{aligned}&R \in \R^{d\times d}\ upper\ triangular\\ &Q\ symplectic\ orthogonal\end{aligned}\right\}.
\end{equation*}
\end{theorem}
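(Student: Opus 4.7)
The plan is to construct the symplectic orthogonal factor $Q$ by orthonormalising a Lagrangian basis extracted directly from $H$, and to recover the triangular factor as $T=Q^{T}H$. Write $H=\begin{bmatrix} A & B \\ C & D \end{bmatrix}\in SP$ and let $N=\begin{bmatrix} A \\ C \end{bmatrix}$ be its first $d$ columns. Restricting the identity $H^{T}JH=J$ to these columns yields the two facts that drive the whole argument: $N^{T}JN=A^{T}C-C^{T}A=0$, and $N$ has rank $d$ (since $H$ is invertible). Thus the column span of $N$ is a Lagrangian subspace of $\R^{2d}$.

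Next I would apply an ordinary real QR (Gram--Schmidt) decomposition to $N$ to obtain $N=MR$ with $M\in\R^{2d\times d}$ having orthonormal columns and $R\in\R^{d\times d}$ upper triangular and invertible. Because $M=NR^{-1}$, one has $M^{T}JM=R^{-T}(N^{T}JN)R^{-1}=0$, so $M$ is an orthonormal basis of the same Lagrangian subspace. Partition $M=\begin{bmatrix} U \\ -V \end{bmatrix}$ with $U,V\in\R^{d\times d}$; the relations $M^{T}M=I$ and $M^{T}JM=0$ then become $U^{T}U+V^{T}V=I$ and $U^{T}V=V^{T}U$, which are exactly the conditions that make
\begin{equation*}
Q\coloneqq\begin{bmatrix} U & V \\ -V & U \end{bmatrix}
\end{equation*}
simultaneously orthogonal and symplectic.

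It then remains to inspect $Q^{T}H$. The top half of its first block column is $U^{T}A-V^{T}C=M^{T}N=R$; the bottom half is $V^{T}A+U^{T}C=\begin{bmatrix} V^{T} & U^{T} \end{bmatrix}N$, which vanishes because $\begin{bmatrix} V^{T} & U^{T} \end{bmatrix}M=V^{T}U-U^{T}V=0$. Hence $Q^{T}H$ has the block form $\begin{bmatrix} R & Z \\ 0 & W \end{bmatrix}$, and since $Q^{T}H$ is symplectic with $R$ invertible, the identity $(Q^{T}H)^{T}J(Q^{T}H)=J$ forces $W=R^{-T}$ (and places on $Z$ the symmetry constraint implicit in the stated form). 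The reverse inclusion is immediate, as both factors on the right-hand side are symplectic.

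The step I expect to be the crucial one is the Lagrangian observation at the beginning: without it, Gram--Schmidt on $N$ would only produce an orthonormal basis of some $d$-plane, with no reason for the resulting matrix $Q$ to be symplectic. Once the Lagrangian property of $N$ (and its preservation under orthonormalisation) is in hand, the identification of orthonormal Lagrangian bases of $\R^{2d}$ with symplectic orthogonal matrices---equivalently, the standard embedding $U(d)\hookrightarrow O(2d)\cap SP$---does the real work, and the remaining verifications are routine block algebra.
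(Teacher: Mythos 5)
Your proof is correct. Note that the paper does not actually prove this theorem: it is quoted as background from the literature (the reference given is Bunse-Gerstner), so there is no in-paper argument to compare against. Your route is the standard one behind the symplectic QR decomposition: observe that the first $d$ columns $N$ of $H$ span a Lagrangian subspace ($N^{T}JN=0$, full rank), orthonormalise them by ordinary QR, and use the identities $U^{T}U+V^{T}V=I$ and $U^{T}V=V^{T}U$ to recognise the resulting orthonormal Lagrangian frame as the first block column of a matrix in $O(2d)\cap SP$ (the image of $U(d)$ under the usual embedding); the block-triangular form of $Q^{T}H$ and $W=R^{-T}$ then follow from the symplectic identities, exactly as you say. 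The only point worth flagging is one you already noticed: as literally stated, the right-hand side needs the implicit constraint that the triangular factor be symplectic (equivalently $R^{-1}Z$ symmetric, with $R$ invertible), otherwise the set on the right would be strictly larger than $SP$; your parenthetical remark handles this correctly, and the reverse inclusion is then immediate.
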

The detailed information of the QR-like factorization can be found in \cite{bunse1986matrix}.
\item
\textbf{Polar factorization.}
\begin{theorem}
The set of $2d$-by-$2d$ symplectic matrices is
\begin{equation*}
SP=\left\{QP\Bigg|\begin{aligned}&P=P^{T}\ symplectic\ positive\ definite\\ &Q\ symplectic\ orthogonal\end{aligned}\right\}.
\end{equation*}
\end{theorem}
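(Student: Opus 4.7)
The plan is to use the classical polar decomposition $H = QP$ with $P = (H^{T}H)^{1/2}$ and $Q = HP^{-1}$, and then verify that both factors inherit the symplectic property. The non-symplectic part is standard: since $H$ is symplectic it is invertible, hence $H^{T}H$ is symmetric positive definite; its principal square root $P$ is symmetric positive definite; and $Q = HP^{-1}$ is orthogonal because $Q^{T}Q = P^{-1}H^{T}HP^{-1} = P^{-1}P^{2}P^{-1} = I$.

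The heart of the argument is to promote ``orthogonal'' and ``positive definite'' to ``symplectic orthogonal'' and ``symplectic positive definite''. First I would observe that the transpose of a symplectic matrix is symplectic: from $H^{T}JH = J$ one obtains $HJH^{T} = J$ by a short manipulation using $J^{-1} = -J$, so that $(H^{T})^{T}JH^{T} = J$. Consequently $H^{T}H$ lies in $SP$ as a product of two elements of $SP$. The remaining step is a lemma stating that the unique positive definite square root of a symplectic, symmetric positive definite matrix is again symplectic. Once this is established, $P$ is symplectic, $Q = HP^{-1}$ is symplectic as a product in $SP$, and therefore $Q$ is symplectic orthogonal.

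To prove the square root lemma I would exploit the identity $A^{-1} = -JAJ$, which holds for any symmetric positive definite $A \in SP$ (a direct rearrangement of $AJA = J$). Setting $B \coloneqq -JA^{1/2}J$, a one-line computation gives $B^{2} = JA^{1/2}J^{2}A^{1/2}J \cdot (-1)^{2} = -JAJ = A^{-1}$. The matrix $B$ is symmetric since $J^{T} = -J$, and it is positive definite because the change of variables $w = Jv$ turns $v^{T}Bv$ into $w^{T}A^{1/2}w > 0$. By uniqueness of the positive definite square root, $B = (A^{1/2})^{-1}$, i.e.\ $(A^{1/2})^{-1} = -JA^{1/2}J$, which is exactly the condition that $A^{1/2}$ be symplectic.

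The main obstacle is this square root lemma; everything else reduces to the standard polar decomposition together with the group structure of $SP$. An alternative route would be to orthogonally diagonalize $A$ and invoke the fact that the eigenvalues of a symplectic matrix occur in reciprocal pairs, but the identity $A^{-1} = -JAJ$ yields a cleaner, coordinate-free proof.
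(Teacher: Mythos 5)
Your proposal is correct and complete. Note that the paper itself gives no proof of this theorem: it is quoted as a known result with a pointer to the literature (Higham et al.), so there is no in-paper argument to compare against. Your route is the standard one for the symplectic polar decomposition, and the one genuinely nontrivial step --- that the unique positive definite square root of a symmetric positive definite matrix in $SP$ is again in $SP$ --- is handled cleanly: the identity $A^{-1}=-JAJ$ follows from $AJA=J$, the candidate $B=-JA^{1/2}J$ is symmetric positive definite with $B^{2}=A^{-1}$, and uniqueness of the positive definite square root forces $B=(A^{1/2})^{-1}$, which is precisely the symplecticity condition for the symmetric matrix $A^{1/2}$. Combined with the facts that $H^{T}\in SP$ and that $SP$ is closed under products and inverses (Property 2 of the paper), this yields $P=(H^{T}H)^{1/2}$ symplectic positive definite and $Q=HP^{-1}$ symplectic orthogonal. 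The only point worth stating explicitly for a set equality is the reverse inclusion --- that every product $QP$ of a symplectic orthogonal and a symplectic positive definite matrix lies in $SP$ --- but this is immediate from closure under multiplication, so it is a presentational remark rather than a gap.
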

The detailed information of the polar factorization can be found in \cite{higham2006symmetric}.
\item
\textbf{SVD-like factorization.}
\begin{theorem}
The set of $2d$-by-$2d$ symplectic matrices is
\begin{equation*}
SP=\left\{U\begin{bmatrix} \Omega & 0 \\ 0 & \Omega^{-1} \end{bmatrix}V^{T}\Bigg|\begin{aligned}&U,V\ symplectic\ orthogonal\\ &\Omega = diag(\omega_{1},\cdot\cdot\cdot,\omega_{d})\\ &\omega_{1}\geq\cdot\cdot\cdot\geq\omega_{d}\geq 1\end{aligned}\right\}.
\end{equation*}
\end{theorem}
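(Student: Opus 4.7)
The plan is to combine the polar factorization already stated with a symplectic-orthogonal spectral decomposition of its positive definite factor.

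First I would apply the polar factorization to write $H=QP$ with $Q$ symplectic orthogonal and $P=P^{T}$ symplectic positive definite, reducing the problem to diagonalizing $P$ by a symplectic orthogonal change of basis. Rewriting $P^{T}JP=J$ using symmetry gives $PJP=J$, hence $P^{-1}=JPJ^{-1}$, so $P$ is similar to $P^{-1}$ through $J$. Consequently, if $Pv=\lambda v$ then $P(J^{-1}v)=\lambda^{-1}J^{-1}v$, and combined with positive definiteness this forces the eigenvalues of $P$ to come in reciprocal pairs $(\omega_{i},\omega_{i}^{-1})$ with $\omega_{i}\geq 1$.

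Next I would construct the diagonalizing matrix $V$ explicitly. For each eigenvalue $\omega_{i}>1$ I pick a unit eigenvector $v_{i}$; then $Jv_{i}$ is automatically a unit eigenvector for $\omega_{i}^{-1}$, orthogonal to $v_{i}$ (distinct eigenspaces of the symmetric $P$) and similarly orthogonal to every $v_{j},Jv_{j}$ belonging to a distinct reciprocal pair. The delicate case is the eigenspace $E_{1}$ for $\omega=1$, which is $J$-invariant; since $J^{T}=-J$ and $J^{2}=-I$, the restriction $J|_{E_{1}}$ is an orthogonal complex structure, in particular forcing $\dim E_{1}$ to be even. I would inductively choose a unit vector $e\in E_{1}$, verify $\{e,Je\}$ is orthonormal using $\langle e,Je\rangle=-\langle Je,e\rangle=0$, and pass to its orthogonal complement inside $E_{1}$, which is again $J$-invariant; iterating yields a paired orthonormal basis of $E_{1}$.

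Ordering the resulting pairs so that $\omega_{1}\geq\cdots\geq\omega_{d}\geq 1$ and forming the block matrix $V=[v_{1},\ldots,v_{d},Jv_{1},\ldots,Jv_{d}]$, a direct block computation gives $V^{T}V=I_{2d}$ and $V^{T}JV=J$, so $V$ is simultaneously orthogonal and symplectic, and $V^{T}PV=\mathrm{diag}(\omega_{1},\ldots,\omega_{d},\omega_{1}^{-1},\ldots,\omega_{d}^{-1})$ by construction. Setting $U=QV$, a product of symplectic orthogonals and hence symplectic orthogonal, yields
\begin{equation*}
H=QP=(QV)(V^{T}PV)V^{T}=U\begin{bmatrix}\Omega & 0\\0 & \Omega^{-1}\end{bmatrix}V^{T},
\end{equation*}
which is the claimed factorization. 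I expect the main obstacle to be the bookkeeping for the $E_{1}$ eigenspace: one must justify that after peeling off each $\{e,Je\}$ pair, the orthogonal complement inside $E_{1}$ remains $J$-invariant, so that the inductive construction of the paired basis actually terminates with a basis of the stated block form; every other step is either the polar factorization, standard spectral theory for symmetric matrices, or direct $2\times 2$ block algebra.
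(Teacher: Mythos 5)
The paper does not actually prove this theorem: it is quoted in the introduction as background (the ``SVD-like factorization'') with the proof deferred to the cited reference \cite{xu2003svd}, so there is no in-paper argument to compare against. Your self-contained route --- polar factorization $H=QP$, then a symplectic-orthogonal diagonalization of the positive definite symplectic factor $P$ via the similarity $P^{-1}=JPJ^{-1}$ and the pairing $v\mapsto Jv$ between the $\omega$- and $\omega^{-1}$-eigenspaces --- is the standard proof of this result (essentially the Williamson-type normal form restricted to $SPP$), and its overall structure is sound, including the treatment of the $J$-invariant eigenspace $E_{1}$ for $\omega=1$. The reverse inclusion (every matrix of the stated form is symplectic) is the trivial direction and is fine to leave implicit.

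One concrete slip: with $J=\begin{bmatrix}0&I\\-I&0\end{bmatrix}$ and $V=[v_{1},\ldots,v_{d},Jv_{1},\ldots,Jv_{d}]$, writing $A=[v_{1},\ldots,v_{d}]$ a direct computation gives
\begin{equation*}
V^{T}JV=\begin{bmatrix}A^{T}JA & A^{T}J^{2}A\\ A^{T}J^{T}JA & A^{T}J^{T}JJA\end{bmatrix}=\begin{bmatrix}0&-I\\ I&0\end{bmatrix}=-J,
\end{equation*}
so your $V$ is orthogonal but \emph{anti}-symplectic. Replace the second block of columns by $-Jv_{i}$ (equivalently $J^{T}v_{i}$), which are still unit eigenvectors for $\omega_{i}^{-1}$; then $V^{T}JV=J$ and $V^{T}PV=\mathrm{diag}(\omega_{1},\ldots,\omega_{d},\omega_{1}^{-1},\ldots,\omega_{d}^{-1})$ as desired. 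Also, for an eigenvalue $\omega_{i}>1$ of multiplicity greater than one you should take an orthonormal basis of that eigenspace rather than ``a unit eigenvector per eigenvalue,'' so that the $v_{j}$ within one eigenspace are mutually orthogonal; with that wording fixed, the remaining orthogonality bookkeeping goes through exactly as you describe.
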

The detailed information of the SVD-like factorization can be found in \cite{xu2003svd}.
\item
\textbf{Transvections factorization.} In text books on classical groups like \cite{artin2016geometric,taylor1992geometry,weyl1946classical} this result is mostly stated as a corollary of another basic fact, namely that the symplectic group is generated by so-called symplectic transvections.
\begin{definition}
For $0\neq u\in\R^{2d}$ and $0\neq\beta\in\R$, the matrix
\begin{equation*}
G=I+\beta uu^{T}J\in\R^{2d\times 2d}
\end{equation*}
is symplectic, and $G$ is called a symplectic transvection.
\end{definition}
Any symplectic matrix can be factored as the product of many symplectic transvections.
\begin{theorem}
The set of $2d$-by-$2d$ symplectic matrices is
\begin{equation*}
SP=\left\{G_{1}G_{2}\cdots G_{m}\Bigg|\begin{aligned}&G_{i}\ a\ symplectic\ transvection\\ &m\leq 4d\end{aligned}\right\}.
\end{equation*}
\end{theorem}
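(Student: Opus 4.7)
\medskip

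\noindent\textbf{Proof plan.}
The plan is to reduce everything to a \emph{transitivity lemma} for transvections acting on vectors, and then peel off one symplectic plane at a time. The key lemma to establish first is: given any two nonzero vectors $w,v\in\R^{2d}$, there exist at most two symplectic transvections whose product sends $w$ to $v$. Since a transvection $G=I+\beta uu^{T}J$ acts by $Gw=w+\beta(u^{T}Jw)u$, the displacement $Gw-w$ is forced to lie in $\mathrm{span}\{u\}$. Choosing $u=v-w$ I would verify that a \emph{single} transvection works precisely when $v^{T}Jw\neq 0$, by solving $\beta=1/(v^{T}Jw)$. When $v^{T}Jw=0$ and $v\neq w$, I would insert an auxiliary vector $z$ satisfying $z^{T}Jw\neq 0$ and $v^{T}Jz\neq 0$; such $z$ exists because the two ``bad'' hyperplanes $w^{\perp_{J}}$ and $v^{\perp_{J}}$ cannot exhaust $\R^{2d}$, so two successive transvections do the job.

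With the lemma in hand, the theorem follows by induction on $d$. The base case $d=0$ is vacuous (empty product). For the inductive step, given $H\in SP$ set $w_{1}=He_{1}\neq 0$; by the lemma, at most two transvections $T_{1},T_{2}$ satisfy $T_{2}T_{1}He_{1}=e_{1}$, so $H_{1}:=T_{2}T_{1}H$ fixes $e_{1}$. I next want to arrange also $H_{1}e_{d+1}=e_{d+1}$ using transvections that fix $e_{1}$. The symplectic identity $e_{1}^{T}Je_{d+1}=1$ combined with $H_{1}e_{1}=e_{1}$ forces the $(d+1)$-th coordinate of $w_{d+1}:=H_{1}e_{d+1}$ to equal $1$, so the vector $e_{d+1}-w_{d+1}$ has vanishing $(d+1)$-th coordinate. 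A transvection with direction $u$ fixes $e_{1}$ iff $u_{d+1}=0$, so I would re-run the lemma inside the hyperplane $\{u:u_{d+1}=0\}$ and produce at most two further transvections $T_{3},T_{4}$, each fixing $e_{1}$, with $T_{4}T_{3}w_{d+1}=e_{d+1}$; a second dimension count (now within a codimension-$1$ subspace of $\R^{2d}$) gives the needed intermediate $z$.

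After these at most four transvections, $H_{2}:=T_{4}T_{3}T_{2}T_{1}H$ fixes the symplectic plane $\mathrm{span}\{e_{1},e_{d+1}\}$. Because $H_{2}$ is symplectic, it also preserves the symplectic complement $V:=\mathrm{span}\{e_{2},\dots,e_{d},e_{d+2},\dots,e_{2d}\}$ and restricts there to a $2(d-1)\times 2(d-1)$ symplectic matrix. The inductive hypothesis gives a factorization of this restriction into at most $4(d-1)$ transvections of $V$, each of which lifts back to a transvection of $\R^{2d}$ (take the same $u\in V$; the conditions $u_{1}=u_{d+1}=0$ ensure $e_{1},e_{d+1}$ stay fixed). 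Multiplying out, $H=T_{1}^{-1}T_{2}^{-1}T_{3}^{-1}T_{4}^{-1}H_{2}$, where inverses of transvections are again transvections, yields a product of at most $4+4(d-1)=4d$ transvections, closing the induction.

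The main obstacle I anticipate is the \emph{constrained} second application of the lemma: I must produce two transvection directions inside the codimension-$1$ subspace $\{u_{d+1}=0\}$ that both send $w_{d+1}$ to $e_{d+1}$ and satisfy the nondegeneracy $z^{T}Jw_{d+1}\neq 0$, $e_{d+1}^{T}Jz\neq 0$. The computation $(w_{d+1})_{d+1}=1$ coming from the symplectic relation is the crucial input that keeps $e_{d+1}-w_{d+1}$ in the admissible hyperplane; without it, the reduction would fail. Everything else is bookkeeping, and the $4d$ bound is obtained merely by summing the per-step budget $4$ across the $d$ symplectic planes.
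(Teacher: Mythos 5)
Your plan is correct, and it is essentially the classical argument from the sources the paper itself cites for this statement: the paper does not prove this theorem at all, but quotes it as known background (referring the reader to Flaschka et al.\ and Mackey--Mackey, where exactly this induction on hyperbolic pairs appears). All the load-bearing steps in your outline check out: the two-transvection transitivity lemma (one transvection with $u=v-w$, $\beta=1/(v^{T}Jw)$ when $v^{T}Jw\neq 0$; otherwise an intermediate $z$ outside the union of the two hyperplanes $\{z^{T}Jw=0\}$ and $\{v^{T}Jz=0\}$, which cannot cover $\R^{2d}$); the observation that a transvection fixes $e_{1}$ iff $u_{d+1}=0$; the computation $e_{1}^{T}JH_{1}e_{d+1}=e_{1}^{T}Je_{d+1}=1$, i.e.\ $(w_{d+1})_{d+1}=1$, which keeps $e_{d+1}-w_{d+1}$ admissible; and the lift of transvections of the symplectic complement $V$ back to $\R^{2d}$. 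One small imprecision worth fixing when you write this up: in the constrained second application of the lemma, the intermediate point $z$ must satisfy $z_{d+1}=1$ (so that both directions $z-w_{d+1}$ and $e_{d+1}-z$ have vanishing $(d+1)$-th coordinate), so you are choosing $z$ from an \emph{affine} hyperplane rather than a linear codimension-one subspace; the covering argument still goes through because an affine hyperplane over $\R$ cannot be contained in the union of two linear hyperplanes, but the statement "re-run the lemma inside the hyperplane $\{u:u_{d+1}=0\}$" should be rephrased accordingly. With that adjustment, the per-plane budget of $4$ and the total bound $4d$ follow exactly as you describe.
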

Although this factorization can freely parameterize the symplectic matrices, in numerical practice, the computed product of lots of symplectic matrices may be far from being symplectic when $d$ is large. There are some works parameterizing the symplectic matrices as the finite product of certain elementary units \cite{kronsbein1964note,murnaghan1953parametrisation}, however, which take effect on only special structured symplectic matrices. The detailed information of the transvections factorization can be found in \cite{flaschka1991analysis,mackey2003determinant}.
\end{itemize}

In this work, we will show a more elementary factorization of the symplectic matrices. Considering
\begin{equation} \label{eq:tri_units}
\begin{bmatrix} I & S \\ 0 & I\end{bmatrix},\quad \begin{bmatrix} I & 0 \\ S & I\end{bmatrix},
\end{equation}
 where $S$ is symmetric, we will factor the symplectic matrices as the products of at most 9 \textbf{unit triangular symplectic matrices} shown above. In addition, we call the symplectic matrices like
 \begin{equation} \label{eq:dia_units}
 \begin{bmatrix} P & 0 \\ 0 & P^{-T} \end{bmatrix}
 \end{equation}
as \textbf{diagonal symplectic matrices}. These two types of symplectic matrices are concerned as the basic units in our factorizations.

Since the symplectic matrix with nonsingular left upper block is easy to deal with, in classical factorizations, researchers have to exploit some intractable factors such as symplectic interchanges and permutation matrices to make a general symplectic matrix equipped with nonsingular left upper block. Different from the conventional methods, we find a family of unit triangular symplectic matrices which can transform a general symplectic matrix into one having nonsingular left upper block, it is the core of the theoretical results in this work. With this result, we are able to factor the symplectic matrix into three unit triangular symplectic matrices and one diagonal symplectic matrix, called \textbf{unit ULU factorization}. Furthermore, we decompose the diagonal symplectic matrix into several unit triangular symplectic matrices and subsequently obtain the final \textbf{unit triangular factorization}.

The unit triangular factorization immediately leads to many significant properties of the matrix symplectic group, such as, (\romannumeral1) the determinant of symplectic matrix is one, (\romannumeral2) the matrix symplectic group is path connected, (\romannumeral3) all the unit triangular symplectic matrices form a set of generators of the matrix symplectic group. In addition, we will consider about some structured subsets of the matrix symplectic group such as the symplectic matrices with nonsingular left upper block, the positive definite symplectic matrices and the symplectic M-matrices, which can be factored into less than 9 unit triangular symplectic matrices due to their special structures.

A remarkable characteristic of the unit triangular factorization is that it provides an unconstrained parametrization for symplectic matrices. For the general symplectic matrices, the positive definite symplectic matrices and the singular symplectic matrices, we will explore their unconstrained parametrization respectively. With the unconstrained parametrization, one may be able to apply faster and more efficient unconstrained optimization algorithms to the problems with symplectic constraints under certain circumstances, which will be discussed in detail later. It was in fact our work on optimization problem with symplectic constraints that led us to consider the unit triangular factorization of the symplectic matrices.

The remaining parts of this paper are organized as follows. Some basic properties that will be used later are given in Section \ref{sec:preliminaries}. Section \ref{sec:factorization} presents the detailed proofs of the factorizations and deduces several important inferences. In Section \ref{sec:parametrization}, we provide the unconstrained parametrization for the matrix symplectic group and its structured subsets. Some conclusions will be given in the last section.

\section{Preliminaries} \label{sec:preliminaries}
At first, we introduce the notations will be used as well as some relevant basic results for factorization of symplectic matrices. As shown in Definition \ref{def:SP}, in this work we consider the real $2d$-by-$2d$ matrix symplectic group $SP$. Property \ref{pro:smat_pro}-\ref{pro:smat_cond} are well-known and can be easily verified according to the definition of $SP$. We omit the proofs and the readers are referred to \cite{de2006symplectic,feng2010symplectic} for more details. Since the features of symplectic matrices are not apparent in Definition \ref{def:SP}, there is an equivalent condition of being symplectic.
\begin{property} \label{pro:smat_pro}
If $H=\begin{bmatrix} A_{1} & B_{1} \\ A_{2} & B_{2} \end{bmatrix}\in \R^{2d\times 2d}$ is a symplectic matrix and $A_{i},B_{i}\in \R^{d\times d}$, then
\begin{enumerate}[(i)]
    \item $A_{1}^{T}A_{2}=A_{2}^{T}A_{1}$,
    \item $B_{1}^{T}B_{2}=B_{2}^{T}B_{1}$,
    \item $A_{1}^{T}B_{2}-A_{2}^{T}B_{1}=I$,
\end{enumerate}
vice versa. Note that an $A=\begin{bmatrix}A_{1} \\ A_{2}\end{bmatrix}\in\R^{2d\times d}$ satisfying the first item is called a symmetric pair.
\end{property}
Property \ref{pro:smat_pro} points out the detailed relationship among the four blocks in a symplectic matrix, it plays an important role on unpacking the features. Based on Property \ref{pro:smat_pro}, one may check that a symplectic matrix being unit and triangular must be written in form (\ref{eq:tri_units}). Besides the inner structure of symplectic matrix, there also hold several overall natures, for instance, $SP$ remains closed under multiplication, transposition and inversion.
\begin{property} \label{pro:smat_trans}
If $H,G\in SP$, then
\begin{enumerate}[(i)]
    \item $HG\in SP$,
    \item $H^{T}\in SP$,
    \item $H^{-1}\in SP$.
\end{enumerate}
\end{property}
With the closure under multiplication and inversion, we can easily check that $SP$ immediately forms a group. In addition, the second item in Property \ref{pro:smat_trans} indicates that the equivalent condition in Property \ref{pro:smat_pro} can be written in another way, as
\begin{equation}
\left\{
\begin{aligned}
&A_{1}B_{1}^{T}=B_{1}A_{1}^{T} \\
&A_{2}B_{2}^{T}=B_{2}A_{2}^{T} \\
&A_{1}B_{2}^{T}-B_{1}A_{2}^{T}=I
\end{aligned}
\right..
\end{equation}

In this work, we are aiming to seek elementary factorizations based on only (\ref{eq:tri_units}) and (\ref{eq:dia_units}), which are the most concise symplectic matrices. To figure out how the unit triangular matrices and the diagonal matrices are being symplectic, here shows a property as follows.
\begin{property} \label{pro:smat_cond}
If $S,P,Q,A,B,C\in\R^{d\times d}$, then
\begin{enumerate}[(i)]
    \item The matrix $\begin{bmatrix} I & S \\ 0 & I \end{bmatrix}$ is symplectic if and only if $S^{T}=S$,
    \item The matrix $\begin{bmatrix} I & 0 \\ S & I \end{bmatrix}$ is symplectic if and only if $S^{T}=S$,
    \item The matrix $\begin{bmatrix} P & 0 \\ 0 & Q \end{bmatrix}$ is symplectic if and only if $Q=P^{-T}$.
\end{enumerate}
\end{property}
The mentioned two types of matrices, i.e., the unit triangular symplectic matrices and the diagonal symplectic matrices, have satisfactory properties and clear structures. In the later sections we will think of a way to do the unconstrained parametrization of symplectic matrices and its structured subsets, for this purpose, the unit triangular symplectic matrices (\ref{eq:tri_units}) are more preferred, since each one is determined by $\frac{d(d+1)}{2}$ free parameters while the diagonal symplectic matrices (\ref{eq:dia_units}) require the nonsingularity on $P$.

Although the determinant of symplectic matrix is obviously $\pm 1$, it is not easy to confirm that the determinant is always $1$. However, in some special cases, we can prove it without any difficulty. For instance, when the symplectic matrix has a nonsingular left upper block, we immediately derive such a LDU factorization whose factors are simple enough to obtain the determinants.
\begin{property}[LDU Factorization] \label{pro:LDU}
If $H=\begin{bmatrix} A_{1} & B_{1} \\ A_{2} & B_{2} \end{bmatrix}\in \R^{2d\times 2d}$ is a symplectic matrix and $A_{i},B_{i}\in \R^{d\times d}$, moreover $A_{1}$ is nonsingular, then $H$ has three unique factorizations
\begin{equation} \label{eq:left_LDU}
\left\{
\begin{aligned}
&H=\begin{bmatrix} P_{1} & 0 \\ 0 & P_{1}^{-T} \end{bmatrix}\begin{bmatrix} I & 0 \\ S_{1} & I \end{bmatrix}\begin{bmatrix} I & T_{1} \\ 0 & I \end{bmatrix} \\
&S_{1}=A_{1}^{T}A_{2},\ T_{1}=A_{1}^{-1}B_{1},\ P_{1}=A_{1} \\
\end{aligned}
\right.,
\end{equation}
\begin{equation} \label{eq:center_LDU}
\left\{
\begin{aligned}
&H=\begin{bmatrix} I & 0 \\ S_{2} & I \end{bmatrix}\begin{bmatrix} P_{2} & 0 \\ 0 & P_{2}^{-T} \end{bmatrix}\begin{bmatrix} I & T_{2} \\ 0 & I \end{bmatrix} \\
&S_{2}=A_{2}A_{1}^{-1},\ T_{2}=A_{1}^{-1}B_{1},\ P_{2}=A_{1} \\
\end{aligned}
\right.,
\end{equation}
\begin{equation} \label{eq:right_LDU}
\left\{
\begin{aligned}
&H=\begin{bmatrix} I & 0 \\ S_{3} & I \end{bmatrix}\begin{bmatrix} I & T_{3} \\ 0 & I \end{bmatrix}\begin{bmatrix} P_{3} & 0 \\ 0 & P_{3}^{-T} \end{bmatrix} \\
&S_{3}=A_{2}A_{1}^{-1},\ T_{3}=B_{1}A_{1}^{T},\ P_{3}=A_{1} \\
\end{aligned}
\right.,
\end{equation}
where $S_{1},S_{2},S_{3},T_{1},T_{2},T_{3}$ are symmetric and $P_{1},P_{2},P_{3}$ are nonsingular.
\end{property}
\begin{proof}
Here we show the proof of (\ref{eq:center_LDU}), and the others are the same. Assume that $S_{2},T_{2},P_{2}$ are defined as in (\ref{eq:center_LDU}), then
\begin{equation*}
\begin{split}
&S_{2}^{T}=(A_{2}A_{1}^{-1})^{T}=A_{1}^{-T}A_{2}^{T}A_{1}A_{1}^{-1}=A_{1}^{-T}A_{1}^{T}A_{2}A_{1}^{-1}=A_{2}A_{1}^{-1}=S_{2}, \\
&T_{2}^{T}=(A_{1}^{-1}B_{1})^{T}=A_{1}^{-1}A_{1}B_{1}^{T}A_{1}^{-T}=A_{1}^{-1}B_{1}A_{1}^{T}A_{1}^{-T}=A_{1}^{-1}B_{1}=T_{2}, \\
\end{split}
\end{equation*}
which mean $S_{2},T_{2}$ are symmetric. Furthermore,
\begin{equation*}
\begin{split}
P_{2}^{-T}+S_{2}P_{2}T_{2}=A_{1}^{-T}+A_{2}A_{1}^{-1}A_{1}A_{1}^{-1}B_{1}&=A_{1}^{-T}(I+A_{1}^{T}A_{2}A_{1}^{-1}B_{1}) \\
&=A_{1}^{-T}(I+A_{2}^{T}A_{1}A_{1}^{-1}B_{1}) \\
&=A_{1}^{-T}A_{1}^{T}B_{2} \\
&=B_{2},
\end{split}
\end{equation*}
therefore
\begin{equation*}
\begin{bmatrix} I & 0 \\ S_{2} & I \end{bmatrix}\begin{bmatrix} P_{2} & 0 \\ 0 & P_{2}^{-T} \end{bmatrix}\begin{bmatrix} I & T_{2} \\ 0 & I \end{bmatrix}=\begin{bmatrix} P_{2} & P_{2}T_{2} \\ S_{2}P_{2} & P_{2}^{-T}+S_{2}P_{2}T_{2} \end{bmatrix}=\begin{bmatrix} A_{1} & B_{1} \\ A_{2} & B_{2} \end{bmatrix}=H.
\end{equation*}
The uniqueness is easy to check and we omit it.
\end{proof}
Property \ref{pro:LDU} decomposes the symplectic matrices with nonsingular left upper block into two unit triangular symplectic matrices and one diagonal symplectic matrices, where the diagonal one can be placed at an arbitrary position. What we expect is to derive a similar result for the general symplectic matrices.

In order to complete our proofs, we also have to give the other properties for checking if some columns are a part of a symplectic matrix. The relevant properties can be found in \cite{feng2010symplectic,freiling2002existence}.
\begin{property} \label{pro:part_symm}
If $A=\begin{bmatrix}A_{1} \\ A_{2}\end{bmatrix}\in\R^{2d\times k},1\leq k\leq d$, $A$ is full-rank, and $A_{1}^{T}A_{2}=A_{2}^{T}A_{1}$, then there exists a $A'\in \R^{2d\times(d-k)}$ such that $\begin{bmatrix}A & A'\end{bmatrix}$ is a full-rank symmetric pair.
\end{property}
\begin{proof}
It is shown as \cite[p. 147, Theorem 3.27]{feng2010symplectic}.
\end{proof}
\begin{property} \label{pro:part_SP}
If $A\in\R^{2d\times d}$ is a full-rank symmetric pair, then there exists a $B\in\R^{2d\times d}$ such that $\begin{bmatrix}A & B\end{bmatrix}\in SP$.
\end{property}
\begin{proof}
It is shown as \cite[p. 147, Theorem 3.28]{feng2010symplectic}.
\end{proof}

At the end of this section, we provide a table including the main notations in this paper. See Table \ref{tab:notations}.
\begin{table}[htbp]
    \centering
    \begin{tabular}{p{40pt}|p{300pt}}
        \toprule
        $\R^{2d\times 2d}$ & All the symplectic matrices involved are in the real space $\R^{2d\times 2d}$. \\
        \midrule
        $J$ & $\begin{bmatrix} 0 & I_{d} \\ -I_{d} & 0 \end{bmatrix}$. \\
        \midrule
        $SP$ & The collection of all the $2d$-by-$2d$ symplecic matrices. \\
        \midrule
        $SPN$ & The collection of all the $2d$-by-$2d$ symplecic matrices with nonsingular left upper block. \\
        \midrule
        $SPP$ & The collection of all the $2d$-by-$2d$ symmetric positive definite symplecic matrices. \\
        \midrule
        $SPM$ & The collection of all the $2d$-by-$2d$ symplecic M-matrices. The definition of M-matrices is shown in Definition \ref{def:M-matrices}. \\
        \midrule
        $SPS$ & The collection of all the $2d$-by-$2d$ singular symplecic matrices. The definition of singular symplecic matrices is shown in Definition \ref{def:singular_SP}.\\
        \midrule
        $\mathcal{L}_{n}$ & $\left\{\begin{bmatrix} I & 0/S_{n} \\ S_{n}/0 & I \end{bmatrix}\cdots\begin{bmatrix} I & 0 \\ S_{2} & I \end{bmatrix}\begin{bmatrix} I & S_{1} \\ 0 & I \end{bmatrix}\Bigg|S_{i}\in\R^{d\times d},S_{i}^{T}=S_{i}\right\}$. The unit upper triangular symplectic matrices and the unit lower triangular symplectic matrices appear alternately. \\
        \midrule
        $\mathcal{L}_{n}^{2}$ & $\{LL^T|L\in\mathcal{L}_{n}\}$. A subset of $SPP$. \\
        \midrule
        $\mathcal{L}_{1}^{T}$ & $\{L^{T}|L\in\mathcal{L}_{1}\}$. The collection of all the unit lower triangular symplectic matrices. \\
        \midrule
        $SG$ & $\mathcal{L}_{1}\cup\mathcal{L}_{1}^{T}$. A set of generators of the group $SP$. \\
        \midrule
        $Pa$ & The map for extracting the lower triangular parameters for symmetric matrices. $Pa(S)=(s_{11},s_{21},s_{22},s_{31},\cdots,s_{dd})$, where $S=(s_{ij})\in \R^{d\times d}$, $S^{T}=S$. \\
        \bottomrule
    \end{tabular}
    \caption{The main notations in this paper.}
    \label{tab:notations}
\end{table}

\section{Unit triangular factorization} \label{sec:factorization}
In this section, we will present the detailed proof of the unit triangular factorization. To describe the problem clearly, denote
\begin{equation*}
    \mathcal{L}_{n}=\left\{\begin{bmatrix} I & 0/S_{n} \\ S_{n}/0 & I \end{bmatrix}\cdots\begin{bmatrix} I & 0 \\ S_{2} & I \end{bmatrix}\begin{bmatrix} I & S_{1} \\ 0 & I \end{bmatrix}\Bigg|S_{i}\in\R^{d\times d},S_{i}^{T}=S_{i},i=1,2,\cdots,n\right\},
\end{equation*}
where the unit upper triangular symplectic matrices and the unit lower triangular symplectic matrices appear alternately. And it is clear that $\mathcal{L}_{m}\subset \mathcal{L}_{n}\subset SP$ for all integers $1\leq m\leq n$. Now the main theorem is given as follows.
\begin{theorem}[Unit Triangular Factorization] \label{thm:fac_thm}
$SP=\mathcal{L}_{9}$. Thus any symplectic matrix can be factored into no more than $9$ unit triangular symplectic matrices.
\end{theorem}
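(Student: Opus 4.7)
The plan is to prove $SP = \mathcal{L}_9$ in three stages — reducing to a symplectic matrix with nonsingular upper-left block, applying the LDU factorization of Property \ref{pro:LDU}, and decomposing the resulting diagonal symplectic factor into unit triangular symplectic matrices — then counting factors carefully with boundary mergers.

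For Stage 1 (reduction), given $H = \begin{bmatrix}A_1 & B_1 \\ A_2 & B_2\end{bmatrix}\in SP$, left-multiplying $H$ by $\begin{bmatrix}I & S \\ 0 & I\end{bmatrix}$ (a unit upper triangular symplectic) replaces $A_1$ by $A_1 + SA_2$. I would prove as a lemma that some symmetric $S\in\R^{d\times d}$ exists making $A_1 + SA_2$ nonsingular. The key structural input is that the $d\times 2d$ block $[A_1\ A_2]$ has full row rank $d$; this follows from the identity $B_2A_1^T - A_2B_1^T = I$ (equivalent to $HJH^T=J$), since any $v$ with $A_1^Tv = A_2^Tv = 0$ satisfies $-A_2(B_1^Tv) = v$, placing $v$ in both $\mathrm{im}(A_2)$ and $\mathrm{im}(A_2)^\perp$, so $v=0$. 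From this rank condition one then argues that the polynomial $\det(A_1+SA_2)$ in the $d(d+1)/2$ symmetric entries of $S$ is not identically zero, so a suitable symmetric $S$ exists.

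For Stage 2 (unit ULU factorization), the central form \eqref{eq:center_LDU} applied to $\begin{bmatrix}I & S \\ 0 & I\end{bmatrix}H$ gives $\begin{bmatrix}I & S \\ 0 & I\end{bmatrix}H = LDU$, hence
\[
H \;=\; \begin{bmatrix}I & -S \\ 0 & I\end{bmatrix}\,L\,D\,U,
\]
a product of three unit triangular symplectic matrices (pattern upper, lower, upper) and one diagonal symplectic $D = \begin{bmatrix}P & 0 \\ 0 & P^{-T}\end{bmatrix}$. For Stage 3, I will use the classical fact that every nonsingular real $P$ factors as $P = P_1P_2$ with $P_1, P_2$ symmetric nonsingular (provable by noting that the product of any real Jordan block with the reverse-identity is symmetric, and transporting through the Jordan basis), so
\[
D \;=\; \begin{bmatrix}P_1 & 0 \\ 0 & P_1^{-1}\end{bmatrix}\begin{bmatrix}P_2 & 0 \\ 0 & P_2^{-1}\end{bmatrix}.
\]
Then for each symmetric nonsingular $Q$ the explicit identity
\[
\begin{bmatrix}Q & 0 \\ 0 & Q^{-1}\end{bmatrix} \;=\; \begin{bmatrix}I & 0 \\ -Q^{-1} & I\end{bmatrix}\begin{bmatrix}I & Q-I \\ 0 & I\end{bmatrix}\begin{bmatrix}I & 0 \\ I & I\end{bmatrix}\begin{bmatrix}I & Q^{-1}-I \\ 0 & I\end{bmatrix}
\]
— whose four right-hand factors are all unit triangular symplectic because $-Q^{-1}, Q-I, I, Q^{-1}-I$ are symmetric — decomposes each symmetric diagonal symplectic as $4$ unit triangular symplectic factors in pattern LULU. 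Hence $D$ itself is a product of $8$ unit triangular symplectic factors in pattern LULULULU.

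Substituting, $H$ is written as $11$ unit triangular symplectic factors in the pattern $U, L, L, U, L, U, L, U, L, U, U$. Two adjacencies have matching types: positions $2$--$3$ (both $L$) and positions $10$--$11$ (both $U$); each merges into a single unit triangular symplectic factor, dropping the count from $11$ to $9$ and yielding the alternating pattern $U, L, U, L, U, L, U, L, U$ required by $\mathcal{L}_9$. Thus $SP\subset\mathcal{L}_9$, and the reverse inclusion is trivial. I expect the main obstacle to be the Stage 1 lemma: producing a symmetric $S$ with $A_1+SA_2$ nonsingular is delicate because the symmetric matrices are a proper subspace of all matrices, so a naive unconstrained transversality argument does not directly apply. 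Exploiting the symplectic structure through the rank of $[A_1\ A_2]$ — or producing an explicit $S$ built from the blocks of $H$ — is the conceptual heart of the proof, whereas Stage 3 reduces to verifying a guessed identity plus invoking a classical fact, and Stage 2 is immediate from Property \ref{pro:LDU}.
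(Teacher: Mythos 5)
Your architecture coincides with the paper's: one unit upper triangular factor to make the upper-left block nonsingular (Theorem \ref{thm:nonsin_thm}), then the LDU factorization of Property \ref{pro:LDU} (the paper uses the form (\ref{eq:right_LDU}) inside Lemma \ref{lem:unit_ULU}, you use (\ref{eq:center_LDU}); this is immaterial), then a decomposition of the diagonal symplectic factor via $P=P_1P_2$ with $P_i$ symmetric, and finally a merge count. Your Stages 2 and 3 are correct: applying the same four-factor LULU identity to both $P_1$ and $P_2$ gives eight factors for the diagonal block and two boundary merges ($11-2=9$), whereas the paper (Lemma \ref{lem:decomp}) uses mirror-image identities (ULUL then LULU) to get seven factors with one internal merge and then one boundary merge ($3+7-1=9$); both yield the alternating pattern required by $\mathcal{L}_9$.

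The genuine gap is Stage 1, which you yourself flag as unresolved; it is exactly the content of Theorem \ref{thm:nonsin_thm} and is the heart of the whole theorem. The structural input you propose --- full row rank of the $d\times 2d$ block $[A_1\ A_2]$ --- is correctly proved but is not sufficient to conclude that $\det(A_1+SA_2)$ is a nonzero polynomial in symmetric $S$. For example, $A_1=\begin{bmatrix}0&1\\0&0\end{bmatrix}$, $A_2=\begin{bmatrix}0&0\\0&1\end{bmatrix}$ is a symmetric pair with $[A_1\ A_2]$ of full row rank, yet $A_1+SA_2$ has identically zero first column for every $S$. (This pair cannot come from a symplectic $H$ because $\begin{bmatrix}A_1\\A_2\end{bmatrix}$ has a zero column; the hypothesis your genericity argument actually needs is that $\begin{bmatrix}A_1\\A_2\end{bmatrix}$ is a \emph{full-column-rank symmetric pair}, i.e.\ a Lagrangian frame.) For such frames the non-vanishing of the determinant polynomial is essentially equivalent to exhibiting one good $S$, so as written the sketch is circular. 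The paper closes this constructively: write $A_1=P\begin{bmatrix}I_r&0\\0&O_{d-r}\end{bmatrix}Q$ with $P,Q$ nonsingular and take $S=-P\begin{bmatrix}O_r&0\\0&I_{d-r}\end{bmatrix}P^T$; after conjugating by the diagonal symplectic matrices built from $P$ and $Q$, the symmetric-pair condition of Property \ref{pro:smat_pro} forces the transformed lower-left block into the form $\begin{bmatrix}C_1&0\\C_3&C_4\end{bmatrix}$, and full column rank of the first $d$ columns forces $C_4$ nonsingular, whence the new upper-left block $\begin{bmatrix}I_r&0\\\lambda C_3&\lambda C_4\end{bmatrix}$ is nonsingular for all $\lambda\neq0$. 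You need either this explicit construction or an honest proof that every Lagrangian subspace is transverse to the graph of some symmetric map; without one of them the theorem is not proved.
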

To prove this theorem, we first list some necessary auxiliary results. For convenience, we denote all the unimportant blocks in matrices by ``$\star$'' throughout this paper.
\begin{theorem} \label{thm:nonsin_thm}
For any symplectic matrix $H\in\R^{2d\times 2d}$, there exists a symmetric $S\in \R^{d\times d}$ such that, the factorization
\begin{equation*}
    H=\begin{bmatrix} I & \lambda S \\ 0 & I \end{bmatrix}\begin{bmatrix} P_{\lambda} & \star \\ \star & \star \end{bmatrix}
\end{equation*}
holds with a nonsingular $P_{\lambda}$ for all $\lambda\neq 0$. Hence any symplectic matrix can be decomposed into a unit upper triangular symplectic matrix and a symplectic matrix with nonsingular left upper block. Furthermore, if needed, $S$ can be set to
\begin{equation*}
S=-P\begin{bmatrix} O_{r} & 0 \\ 0 & I_{d-r} \end{bmatrix}P^{T}
\end{equation*}
when the left upper block $A_{1}$ of $H$ with $rank\ r$ is decomposed as
\begin{equation*}
A_{1}=P\begin{bmatrix} I_{r} & 0 \\ 0 & O_{d-r} \end{bmatrix}Q
\end{equation*}
where $P,Q\in \R^{d\times d}$ are nonsingular.
\end{theorem}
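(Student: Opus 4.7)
The plan is to verify the explicit candidate $S$ supplied in the statement. Writing $H = \begin{bmatrix} A_1 & B_1 \\ A_2 & B_2 \end{bmatrix}$, pre-multiplying $H$ by $\begin{bmatrix} I & -\lambda S \\ 0 & I \end{bmatrix}$ alters only the top block-row, and its new left upper block is $P_\lambda = A_1 - \lambda S A_2$. Since $\begin{bmatrix} I & \lambda S \\ 0 & I \end{bmatrix}$ is symplectic as soon as $S$ is symmetric (Property \ref{pro:smat_cond}), the entire theorem reduces to producing a symmetric $S$ for which $A_1 - \lambda S A_2$ is nonsingular for every $\lambda \neq 0$. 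With the rank decomposition $A_1 = P \begin{bmatrix} I_r & 0 \\ 0 & O_{d-r} \end{bmatrix} Q$ in hand, I would set $S = -P \begin{bmatrix} O_r & 0 \\ 0 & I_{d-r} \end{bmatrix} P^T$, which is manifestly symmetric. Intuitively, $S$ is chosen so that $-\lambda S A_2$ supplies missing information exactly in the directions where $A_1$ is rank-deficient.

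The second step is to pull $P$ out on the left of $P_\lambda$ in order to remove $\lambda$ from the conclusion. Partitioning $Q = \begin{bmatrix} Q_1 \\ Q_2 \end{bmatrix}$ and $P^T A_2 = \begin{bmatrix} M_1 \\ M_2 \end{bmatrix}$ into block rows of heights $r$ and $d-r$, a direct computation gives $P^{-1} P_\lambda = \begin{bmatrix} Q_1 \\ \lambda M_2 \end{bmatrix}$. Since $P$ is invertible and $\lambda \neq 0$, the nonsingularity of $P_\lambda$ becomes equivalent to the nonsingularity of the single $d$-by-$d$ block matrix $\begin{bmatrix} Q_1 \\ M_2 \end{bmatrix}$, which no longer involves $\lambda$ at all.

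The main obstacle is this last nonsingularity, where the symplectic hypothesis on $H$ must be used in earnest. I would argue by contradiction: suppose $x \in \R^d$ satisfies $Q_1 x = 0$ and $M_2 x = 0$, and derive $x = 0$ from two consequences of $H \in SP$: (i) the first block column $\begin{bmatrix} A_1 \\ A_2 \end{bmatrix}$ has full column rank $d$, since $H$ is invertible; and (ii) $A_1^T A_2 = A_2^T A_1$, by Property \ref{pro:smat_pro}. From $Q_1 x = 0$ and the rank factorization one reads off $A_1 x = 0$, which combined with (ii) gives $A_1^T (A_2 x) = A_2^T (A_1 x) = 0$. Hence $A_2 x$ lies in $\mathrm{range}(A_1)^{\perp} = \{v : (P^T v)_i = 0 \text{ for } i \leq r\}$, forcing the top $r$ entries of $P^T A_2 x$ to vanish, i.e., $M_1 x = 0$. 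Together with the assumed $M_2 x = 0$ this yields $P^T A_2 x = 0$ and hence $A_2 x = 0$; then (i) applied to $A_1 x = A_2 x = 0$ forces $x = 0$. The displayed formula for $S$ in the theorem statement is exactly the one used in this construction, so the final ``if needed'' clause of the theorem requires no additional work.
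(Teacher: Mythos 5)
Your proposal is correct and follows essentially the same route as the paper: the same choice of $S$, and the nonsingularity of $P_\lambda$ rests on the same two consequences of symplecticity (the symmetric-pair identity $A_1^T A_2 = A_2^T A_1$ and the full column rank of the first block column $\begin{bmatrix} A_1 \\ A_2 \end{bmatrix}$). The only cosmetic difference is that the paper normalizes $A_1$ by conjugating with the diagonal symplectic factors built from $P$ and $Q$ and reads off a block-lower-triangular $D_\lambda$ with nonsingular diagonal blocks, whereas you run a direct kernel argument on $\begin{bmatrix} Q_1 \\ M_2 \end{bmatrix}$; the two verifications are equivalent.
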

\begin{proof}
Denote $H=\begin{bmatrix} A_{1} & B_{1} \\ A_{2} & B_{2} \end{bmatrix}\in SP$ where $A_{i},B_{i}\in \R^{d\times d}$, and assume that $rank(A_{1})=r$. There exist nonsingular $P,Q$ such that $P^{-1}A_{1}Q^{-1}=\begin{bmatrix} I_{r} & 0 \\ 0 & O_{d-r} \end{bmatrix}$, and let $S=-P\begin{bmatrix} O_{r} & 0 \\ 0 & I_{d-r} \end{bmatrix}P^{T}$, $\lambda\neq 0$. Then
\begin{equation*}
\begin{split}
&\begin{bmatrix} I & -\lambda P^{-1}SP^{-T} \\ 0 & I \end{bmatrix}\begin{bmatrix} P^{-1} & 0 \\ 0 & P^{T} \end{bmatrix}\begin{bmatrix} A_{1} & B_{1} \\ A_{2} & B_{2} \end{bmatrix}\begin{bmatrix} Q^{-1} & 0 \\ 0 & Q^{T} \end{bmatrix} \\
=&\begin{bmatrix} I & \lambda\begin{bmatrix} O_{r} & 0 \\ 0 & I_{d-r} \end{bmatrix} \\ 0 & I \end{bmatrix}\begin{bmatrix} \begin{bmatrix} I_{r} & 0 \\ 0 & O_{d-r} \end{bmatrix} & \star \\ \begin{bmatrix} C_{1} & C_{2} \\ C_{3} & C_{4} \end{bmatrix} & \star \end{bmatrix} \\
=&\begin{bmatrix} \begin{bmatrix} I_{r} & 0 \\ \lambda C_{3} & \lambda C_{4} \end{bmatrix} & \star \\ \star & \star \end{bmatrix} \\
=&\begin{bmatrix} D_{\lambda} & \star \\ \star & \star \end{bmatrix}
\end{split}
\end{equation*}
where $C_{1}\in\R^{r\times r},C_{2}\in\R^{r\times(d-r)},C_{3}\in\R^{(d-r)\times r},C_{4}\in\R^{(d-r)\times(d-r)}$ as well as $D_{\lambda}=\begin{bmatrix} I_{r} & 0 \\ \lambda C_{3} & \lambda C_{4} \end{bmatrix}$, next we explain why $D_{\lambda}$ is nonsingular. Since all the matrices involved above are symplectic, we know that $\begin{bmatrix} I_{r} & 0 \\ 0 & O_{d-r} \end{bmatrix}$ and $\begin{bmatrix} C_{1} & C_{2} \\ C_{3} & C_{4} \end{bmatrix}$ satisfy
\begin{equation*}
\begin{bmatrix} I_{r} & 0 \\ 0 & O_{d-r} \end{bmatrix}^{T}\begin{bmatrix} C_{1} & C_{2} \\ C_{3} & C_{4} \end{bmatrix}=\begin{bmatrix} C_{1} & C_{2} \\ C_{3} & C_{4} \end{bmatrix}^{T}\begin{bmatrix} I_{r} & 0 \\ 0 & O_{d-r} \end{bmatrix}
\end{equation*}
due to Property \ref{pro:smat_pro}, which implies that $C_{2}=0$. Furthermore,
\begin{equation*}
\begin{bmatrix} I_{r} & 0 \\ 0 & O_{d-r} \\ C_{1} & C_{2} \\ C_{3} & C_{4} \end{bmatrix}=\begin{bmatrix} I_{r} & 0 \\ 0 & 0 \\ C_{1} & 0 \\ C_{3} & C_{4} \end{bmatrix}
\end{equation*}
is full-rank as the sub-columns of a symplectic matrix, thus $C_{4}$ is also full-rank, i.e., nonsingular. Hence $D_{\lambda}=\begin{bmatrix} I_{r} & 0 \\ \lambda C_{3} & \lambda C_{4} \end{bmatrix}$ is nonsingular.

Now we have
\begin{equation*}
\begin{split}
H=\begin{bmatrix} A_{1} & B_{1} \\ A_{2} & B_{2} \end{bmatrix}&=\begin{bmatrix} P & 0 \\ 0 & P^{-T} \end{bmatrix}\begin{bmatrix} I & \lambda P^{-1}SP^{-T} \\ 0 & I \end{bmatrix}\begin{bmatrix} D_{\lambda} & \star \\ \star & \star \end{bmatrix}\begin{bmatrix} Q & 0 \\ 0 & Q^{-T} \end{bmatrix} \\
&=\begin{bmatrix} P & \lambda SP^{-T} \\ 0 & P^{-T} \end{bmatrix}\begin{bmatrix} D_{\lambda} & \star \\ \star & \star \end{bmatrix}\begin{bmatrix} Q & 0 \\ 0 & Q^{-T} \end{bmatrix} \\
&=\begin{bmatrix} I & \lambda S \\ 0 & I \end{bmatrix}\begin{bmatrix} P & 0 \\ 0 & P^{-T} \end{bmatrix}\begin{bmatrix} D_{\lambda} & \star \\ \star & \star \end{bmatrix}\begin{bmatrix} Q & 0 \\ 0 & Q^{-T} \end{bmatrix} \\
&=\begin{bmatrix} I & \lambda S \\ 0 & I \end{bmatrix}\begin{bmatrix} PD_{\lambda}Q & \star \\ \star & \star \end{bmatrix} \\
&=\begin{bmatrix} I & \lambda S \\ 0 & I \end{bmatrix}\begin{bmatrix} P_{\lambda} & \star \\ \star & \star \end{bmatrix}
\end{split}
\end{equation*}
where $P_{\lambda}=PD_{\lambda}Q$ is nonsingular for all $\lambda\neq 0$.
\end{proof}
\begin{remark}
In the most cases, we are unconcerned about the $\lambda$ in Theorem \ref{thm:nonsin_thm}, and only focus on the factorization
\begin{equation*}
H=\begin{bmatrix} I & S \\ 0 & I \end{bmatrix}\begin{bmatrix} P & \star \\ \star & \star \end{bmatrix}
\end{equation*}
where $S$ is symmetric and $P$ is nonsingular. It in fact yields a symplectic matrix with nonsingular left upper block which is much more easier to deal with.
\end{remark}
To prove the determinant of any symplectic matrix is one, researchers make efforts to have the symplectic matrix equipped with nonsingular left upper block, however, all of their transformations require the intractable blocks like symplectic interchanges and permutation matrices. Theorem \ref{thm:nonsin_thm} is the core of the proof of Theorem \ref{thm:fac_thm}, it transforms a symplectic matrix into one having nonsingular left upper block by one unit triangular symplectic matrix.

\begin{lemma}[Unit ULU Factorization] \label{lem:unit_ULU}
For any symplectic matrix $H\in \R^{2d\times 2d}$, there exist symmetric $S,T,U\in \R^{d\times d}$ and a nonsingular $P\in \R^{d\times d}$ such that
\begin{equation*}
H=\begin{bmatrix} I & S \\ 0 & I \end{bmatrix}\begin{bmatrix}P & \star \\ \star & \star \end{bmatrix}=\begin{bmatrix} I & S \\ 0 & I \end{bmatrix}\begin{bmatrix} I & 0 \\ T & I \end{bmatrix}\begin{bmatrix} I & U \\ 0 & I \end{bmatrix}\begin{bmatrix} P & 0 \\ 0 & P^{-T} \end{bmatrix}.
\end{equation*}
Furthermore, $T,U,P$ are uniquely determined by $H$ and $S$.
\end{lemma}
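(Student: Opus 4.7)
My plan is to obtain this lemma as an immediate composition of two earlier results: Theorem \ref{thm:nonsin_thm} will produce the leading unit upper triangular factor $\begin{bmatrix} I & S \\ 0 & I \end{bmatrix}$ that renders the left upper block nonsingular, and the LUD variant (\ref{eq:right_LDU}) of Property \ref{pro:LDU} will then decompose the remaining symplectic matrix into the three factors $\begin{bmatrix} I & 0 \\ T & I \end{bmatrix}$, $\begin{bmatrix} I & U \\ 0 & I \end{bmatrix}$, $\begin{bmatrix} P & 0 \\ 0 & P^{-T} \end{bmatrix}$. Both the first equality in the statement and the overall four-factor expression will then drop out in one step.

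Concretely, I would first invoke Theorem \ref{thm:nonsin_thm} to choose a symmetric $S\in\R^{d\times d}$ for which
\begin{equation*}
H=\begin{bmatrix} I & S \\ 0 & I \end{bmatrix}M,\qquad M=\begin{bmatrix} A_1' & B_1' \\ A_2' & B_2' \end{bmatrix},
\end{equation*}
with $A_1'$ nonsingular. By Property \ref{pro:smat_trans}, $M$ is automatically symplectic as the product of two symplectic matrices, so Property \ref{pro:LDU} applies to it. Invoking the LUD form (\ref{eq:right_LDU}) then yields symmetric $T=A_2'(A_1')^{-1}$ and $U=B_1'(A_1')^{T}$, together with nonsingular $P=A_1'$, such that
\begin{equation*}
M=\begin{bmatrix} I & 0 \\ T & I \end{bmatrix}\begin{bmatrix} I & U \\ 0 & I \end{bmatrix}\begin{bmatrix} P & 0 \\ 0 & P^{-T} \end{bmatrix}.
\end{equation*}
Substituting back delivers the four-factor decomposition claimed.

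For uniqueness, once $S$ is fixed the product $\begin{bmatrix} I & -S \\ 0 & I \end{bmatrix}H$ is completely determined and equals $M$, and Property \ref{pro:LDU} already asserts the uniqueness of the LUD decomposition of $M$; hence $T,U,P$ are uniquely determined by $H$ and $S$.

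The lemma therefore contains essentially no new work beyond what Theorem \ref{thm:nonsin_thm} and Property \ref{pro:LDU} already supply, so I do not anticipate a serious obstacle. The one point that merits care is matching the correct variant of Property \ref{pro:LDU}: among its three equivalent orderings only (\ref{eq:right_LDU}) places the diagonal factor rightmost with the two unit triangular factors in the L-then-U order needed here; picking either of the other two variants would yield a factorization of $M$ of the wrong shape and break the final composition.
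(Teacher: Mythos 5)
Your proposal is correct and follows essentially the same route as the paper: apply Theorem \ref{thm:nonsin_thm} to peel off the unit upper triangular factor, then apply the variant (\ref{eq:right_LDU}) of Property \ref{pro:LDU} to the remaining symplectic matrix with nonsingular left upper block, with uniqueness following from the uniqueness clause of Property \ref{pro:LDU} once $S$ is fixed. Your formulas $T=A_2'(A_1')^{-1}$, $U=B_1'(A_1')^{T}$, $P=A_1'$ agree with the paper's $T=BP^{-1}$, $U=AP^{T}$.
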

\begin{proof}
Theorem \ref{thm:nonsin_thm} shows that there exists the factorization
\begin{equation*}
H=\begin{bmatrix} I & S \\ 0 & I \end{bmatrix}\begin{bmatrix} P & \star \\ \star & \star \end{bmatrix}
\end{equation*}
where $S$ is symmetric and $P$ is nonsingular. Moreover, by (\ref{eq:right_LDU}),
\begin{equation*}
\begin{bmatrix} P & \star \\ \star & \star \end{bmatrix}=\begin{bmatrix} P & A \\ B & C \end{bmatrix}=\begin{bmatrix} I & 0 \\ BP^{-1} & I \end{bmatrix}\begin{bmatrix} I & AP^{T} \\ 0 & I \end{bmatrix}\begin{bmatrix} P & 0 \\ 0 & P^{-T} \end{bmatrix},
\end{equation*}
hence
\begin{equation*}
H=\begin{bmatrix} I & S \\ 0 & I \end{bmatrix}\begin{bmatrix} I & 0 \\ T & I \end{bmatrix}\begin{bmatrix} I & U \\ 0 & I \end{bmatrix}\begin{bmatrix} P & 0 \\ 0 & P^{-T} \end{bmatrix}
\end{equation*}
where $T=BP^{-1}$ and $U=AP^{T}$ are symmetric. In addition, when $H$ and $S$ are fixed, we are able to obtain $P,A,B,C,T,U$ in sequence, thus the factorization is determined by $H,S$. Note again that all the matrices involved above are symplectic.

\end{proof}

\begin{remark} \label{rem:moving}
Notice that
\begin{equation*}
\begin{bmatrix} P & 0 \\ 0 & P^{-T} \end{bmatrix}\begin{bmatrix} I & S \\ 0 & I \end{bmatrix}=\begin{bmatrix} I & PSP^{T} \\ 0 & I \end{bmatrix}\begin{bmatrix} P & 0 \\ 0 & P^{-T} \end{bmatrix}.
\end{equation*}
The diagonal symplectic matrices like $\begin{bmatrix} P & 0 \\ 0 & P^{-T} \end{bmatrix}$ can be moved to any position between a series of many $\begin{bmatrix} I & \star \\ 0 & I \end{bmatrix}$ and $\begin{bmatrix} I & 0 \\ \star & I \end{bmatrix}$. For instance,
\begin{equation*}
\begin{split}
\begin{bmatrix} P & 0 \\ 0 & P^{-T} \end{bmatrix}\begin{bmatrix} I & 0 \\ \star & I \end{bmatrix}\begin{bmatrix} I & \star \\ 0 & I \end{bmatrix}&=\begin{bmatrix} I & 0 \\ \star & I \end{bmatrix}\begin{bmatrix} P & 0 \\ 0 & P^{-T} \end{bmatrix}\begin{bmatrix} I & \star \\ 0 & I \end{bmatrix} \\
&=\begin{bmatrix} I & 0 \\ \star & I \end{bmatrix}\begin{bmatrix} I & \star \\ 0 & I \end{bmatrix}\begin{bmatrix} P & 0 \\ 0 & P^{-T} \end{bmatrix},
\end{split}
\end{equation*}
therefore we can adjust the position of diagonal symplectic matrices among the unit triangular symplectic matrices by moving, even merge them into one.
\end{remark}

What we just did is applying the LDU factorization to Theorem \ref{thm:nonsin_thm} in the proof of Lemma \ref{lem:unit_ULU}. According to Remark \ref{rem:moving}, Lemma \ref{lem:unit_ULU} has some other forms, like
\begin{equation*}
H=\begin{bmatrix} I & S \\ 0 & I \end{bmatrix}\begin{bmatrix} I & 0 \\ \star & I \end{bmatrix}\begin{bmatrix} P & 0 \\ 0 & P^{-T} \end{bmatrix}\begin{bmatrix} I & \star \\ 0 & I \end{bmatrix}
\end{equation*}
and
\begin{equation*}
H=\begin{bmatrix} I & S \\ 0 & I \end{bmatrix}\begin{bmatrix} P & 0 \\ 0 & P^{-T} \end{bmatrix}\begin{bmatrix} I & 0 \\ \star & I \end{bmatrix}\begin{bmatrix} I & \star \\ 0 & I \end{bmatrix}.
\end{equation*}
The most important fact is the number of unit triangular matrices, rather than the position of diagonal matrix. Note that all the forms involved share the same $P$ due to Property \ref{pro:LDU}.

Actually, Theorem \ref{thm:nonsin_thm} and Lemma \ref{lem:unit_ULU} provide an algorithm for unit ULU factorization, see Algorithm \ref{alg:unit_ULU}.
\begin{algorithm}
\caption{Unit ULU factorization}
\label{alg:unit_ULU}
\begin{algorithmic}
\REQUIRE{$H\in SP$}
\ENSURE{$S,T,U,P$ such that $H=\begin{bmatrix} I & S \\ 0 & I \end{bmatrix}\begin{bmatrix} I & 0 \\ T & I \end{bmatrix}\begin{bmatrix} I & U \\ 0 & I \end{bmatrix}\begin{bmatrix} P & 0 \\ 0 & P^{-T} \end{bmatrix}$}
\STATE{Given $H:=\begin{bmatrix}A&\star \\ \star&\star\end{bmatrix}\in SP$}
\STATE{Compute the factorization $A=P\begin{bmatrix} I_{r} & 0 \\ 0 & O_{d-r} \end{bmatrix}Q$ where $P,Q$ are nonsingular}
\STATE{Set $S:=-P\begin{bmatrix} O_{r} & 0 \\ 0 & I_{d-r} \end{bmatrix}P^{T}$}
\STATE{Set $\begin{bmatrix} A_{1} & B_{1} \\ A_{2} & B_{2} \end{bmatrix}:=\begin{bmatrix} I & -S \\ 0 & I \end{bmatrix}H$}
\STATE{Set $T:=A_{2}A_{1}^{-1}$}
\STATE{Set $U:=B_{1}A_{1}^{T}$}
\STATE{Set $P:=A_{1}$}
\RETURN $S,T,U,P$
\end{algorithmic}
\end{algorithm}

So far we have proved that any symplectic matrix can be factored into three unit triangular symplectic matrices together with one diagonal symplectic matrix, where the number three is optimal, since the product of two only yields a symplectic matrix with nonsingular left upper block. To obtain the final unit triangular factorization, we have to consider about how to decompose the diagonal symplectic matrices next.
\begin{lemma} \label{lem:decomp}
For any symplectic matrix $H=\begin{bmatrix} P & 0 \\ 0 & P^{-T} \end{bmatrix}\in \R^{2d\times 2d}$ where $P\in \R^{d\times d}$ is nonsingular, there exist symmetric $S_{1},S_{2},\cdots,S_{7}\in \R^{d\times d}$ as well as symmetric $T_{1},T_{2},\cdots,T_{7}\in \R^{d\times d}$ such that
\begin{equation*}
\begin{split}
    H&=\begin{bmatrix} I & S_{7} \\ 0 & I \end{bmatrix}\begin{bmatrix} I & 0 \\ S_{6} & I \end{bmatrix}\cdots\begin{bmatrix} I & 0 \\ S_{2} & I \end{bmatrix}\begin{bmatrix} I & S_{1} \\ 0 & I \end{bmatrix} \\
    &=\begin{bmatrix} I & 0 \\ T_{7} & I \end{bmatrix}\begin{bmatrix} I & T_{6} \\ 0 & I \end{bmatrix}\cdots\begin{bmatrix} I & T_{2} \\ 0 & I \end{bmatrix}\begin{bmatrix} I & 0 \\ T_{1} & I \end{bmatrix}.
\end{split}
\end{equation*}
\end{lemma}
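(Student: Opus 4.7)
The plan is to reduce the general case to the symmetric one, where a direct four-factor identity is available. First, I would verify by block multiplication two ``symmetric-case'' identities: for any symmetric nonsingular $S\in\R^{d\times d}$,
\begin{equation*}
\begin{bmatrix} S & 0 \\ 0 & S^{-1} \end{bmatrix}=\begin{bmatrix} I & S-I \\ 0 & I \end{bmatrix}\begin{bmatrix} I & 0 \\ I & I \end{bmatrix}\begin{bmatrix} I & S^{-1}-I \\ 0 & I \end{bmatrix}\begin{bmatrix} I & 0 \\ -S & I \end{bmatrix}
\end{equation*}
is a ULUL decomposition into four unit triangular symplectic matrices, and
\begin{equation*}
\begin{bmatrix} S & 0 \\ 0 & S^{-1} \end{bmatrix}=\begin{bmatrix} I & 0 \\ I & I \end{bmatrix}\begin{bmatrix} I & S^{-1}-I \\ 0 & I \end{bmatrix}\begin{bmatrix} I & 0 \\ -S & I \end{bmatrix}\begin{bmatrix} I & S^{-1}-S^{-2} \\ 0 & I \end{bmatrix}
\end{equation*}
is a companion LULU decomposition. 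Because $S=S^{T}$, every argument that appears ($S-I$, $S^{-1}-I$, $-S$, $S^{-1}-S^{-2}$, $I$) is itself symmetric, so each factor really is unit triangular symplectic by Property~\ref{pro:smat_cond}. These two identities already prove the lemma in the special case $P=P^{T}$, with only four factors rather than seven.

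For general nonsingular $P$, I would invoke the classical Frobenius--Taussky fact that every $P\in\R^{d\times d}$ with $\det P\neq 0$ can be written as $P=S_{a}S_{b}$ with $S_{a},S_{b}\in\R^{d\times d}$ symmetric and nonsingular. (This is equivalent to the statement that every matrix is similar to its transpose via a symmetric conjugation, and is proved through the rational canonical form of $P$ together with the observation that every companion matrix is conjugated to its transpose by a symmetric Hankel matrix; nonsingularity of both factors follows from $\det S_{a}\cdot\det S_{b}=\det P\neq 0$.) Then
\begin{equation*}
\begin{bmatrix} P & 0 \\ 0 & P^{-T} \end{bmatrix}=\begin{bmatrix} S_{a} & 0 \\ 0 & S_{a}^{-1} \end{bmatrix}\begin{bmatrix} S_{b} & 0 \\ 0 & S_{b}^{-1} \end{bmatrix},
\end{equation*}
and I would apply the ULUL identity above to the first factor and the LULU identity to the second. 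The rightmost block of the first expansion and the leftmost block of the second are both lower-triangular, and combine via $\begin{bmatrix} I & 0 \\ X & I \end{bmatrix}\begin{bmatrix} I & 0 \\ Y & I \end{bmatrix}=\begin{bmatrix} I & 0 \\ X+Y & I \end{bmatrix}$, collapsing the $4+4=8$ factors down to $7$ factors in the ULULULU shape, which is the first identity of the lemma. For the second (LULULUL) identity I would swap the roles---LULU for $S_{a}$ and ULUL for $S_{b}$---so that two adjacent upper-triangular factors combine instead.

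The main obstacle is the Frobenius--Taussky step $P=S_{a}S_{b}$: it is classical but not entirely elementary, and a short self-contained proof inside this paper would itself require the rational-canonical-form machinery. A fully self-contained alternative is to bypass it and directly expand a generic seven-factor alternating product as a polynomial in its seven symmetric block arguments, equate the result to $\begin{bmatrix} P & 0 \\ 0 & P^{-T} \end{bmatrix}$, and solve the resulting block matrix equations by hand; the three extra symmetric blocks beyond the 4-factor symmetric case supply exactly the degrees of freedom needed to absorb the non-symmetric part of $P$, but the calculation is markedly more tedious than the clean merging argument sketched above.
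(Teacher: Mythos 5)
Your proposal is correct and follows essentially the same route as the paper: write $P$ as a product of two symmetric matrices (the Frobenius factorization, which the paper asserts without proof just as you flag), expand each symmetric diagonal block $\begin{bmatrix} S & 0 \\ 0 & S^{-1}\end{bmatrix}$ into four unit triangular factors via an explicit ULUL/LULU identity, and merge the two adjacent factors of the same type to collapse $4+4$ into $7$. Your specific four-factor identities differ from the paper's only in the choice of symmetric blocks, and your swap of roles to get the second (LULULUL) form is a minor variant of the paper's transposition argument; both check out.
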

\begin{proof}
At first, there exist two symmetric matrices $P_{1},P_{2}\in \R^{d\times d}$ such that $P=P_{1}\cdot P_{2}$, then we have
\begin{equation*}
    H=\begin{bmatrix} P & 0 \\ 0 & P^{-T} \end{bmatrix}=\begin{bmatrix} P_{1} & 0 \\ 0 & P_{1}^{-T} \end{bmatrix}\begin{bmatrix} P_{2} & 0 \\ 0 & P_{2}^{-T} \end{bmatrix}.
\end{equation*}
Moreover,
\begin{equation*}
\begin{bmatrix} P_{1} & 0 \\ 0 & P_{1}^{-T} \end{bmatrix}=\begin{bmatrix} I & -P_{1} \\ 0 & I \end{bmatrix}\begin{bmatrix} I & 0 \\ P_{1}^{-1}-I & I \end{bmatrix}\begin{bmatrix} I & I \\ 0 & I \end{bmatrix}\begin{bmatrix} I & 0 \\ P_{1}-I & I \end{bmatrix},
\end{equation*}
\begin{equation*}
\begin{bmatrix} P_{2} & 0 \\ 0 & P_{2}^{-T} \end{bmatrix}=\begin{bmatrix} I & 0 \\ -P_{2}^{-1} & I \end{bmatrix}\begin{bmatrix} I & P_{2}-I \\ 0 & I \end{bmatrix}\begin{bmatrix} I & 0 \\ I & I \end{bmatrix}\begin{bmatrix} I & P_{2}^{-1}-I \\ 0 & I \end{bmatrix},
\end{equation*}
so that
\begin{equation*}
\begin{split}
H=&\begin{bmatrix} I & -P_{1} \\ 0 & I \end{bmatrix}\begin{bmatrix} I & 0 \\ P_{1}^{-1}-I & I \end{bmatrix}\begin{bmatrix} I & I \\ 0 & I \end{bmatrix}\begin{bmatrix} I & 0 \\ P_{1}-I-P_{2}^{-1} & I \end{bmatrix} \\
&\cdot\begin{bmatrix} I & P_{2}-I \\ 0 & I \end{bmatrix}\begin{bmatrix} I & 0 \\ I & I \end{bmatrix}\begin{bmatrix} I & P_{2}^{-1}-I \\ 0 & I \end{bmatrix}.
\end{split}
\end{equation*}
Up to now, we have obtained the expressions of $S_{i}$ exactly. For $T_{i}$, we just need to express $H^{T}$ in the above way and return to $H$ by transposition operator again.
\end{proof}
\begin{remark}
As shown in the proof of Lemma \ref{lem:decomp}, some of the blocks can be required to be $I_{d}$ for reducing the degree of freedom of factorization, such as $S_{2},S_{5}$.
\end{remark}

Now we are able to provide the proof of Theorem \ref{thm:fac_thm}.
\begin{proof}
Let $H\in SP$. According to Lemma \ref{lem:unit_ULU}, we have
\begin{equation*}
H=\begin{bmatrix} I & S \\ 0 & I \end{bmatrix}\begin{bmatrix} I & 0 \\ T & I \end{bmatrix}\begin{bmatrix} I & U \\ 0 & I \end{bmatrix}\begin{bmatrix} P & 0 \\ 0 & P^{-T} \end{bmatrix}.
\end{equation*}
where $S,T,U$ are symmetric and $P$ is nonsingular. Lemma \ref{lem:decomp} shows that
\begin{equation*}
\begin{bmatrix} P & 0 \\ 0 & P^{-T} \end{bmatrix}=\begin{bmatrix} I & S_{7} \\ 0 & I \end{bmatrix}\begin{bmatrix} I & 0 \\ S_{6} & I \end{bmatrix}\cdots\begin{bmatrix} I & 0 \\ S_{2} & I \end{bmatrix}\begin{bmatrix} I & S_{1} \\ 0 & I \end{bmatrix},
\end{equation*}
so that
\begin{equation*}
\begin{split}
H&=\begin{bmatrix} I & S \\ 0 & I \end{bmatrix}\begin{bmatrix} I & 0 \\ T & I \end{bmatrix}\begin{bmatrix} I & U \\ 0 & I \end{bmatrix}\begin{bmatrix} I & S_{7} \\ 0 & I \end{bmatrix}\begin{bmatrix} I & 0 \\ S_{6} & I \end{bmatrix}\cdots\begin{bmatrix} I & 0 \\ S_{2} & I \end{bmatrix}\begin{bmatrix} I & S_{1} \\ 0 & I \end{bmatrix} \\
&=\begin{bmatrix} I & S \\ 0 & I \end{bmatrix}\begin{bmatrix} I & 0 \\ T & I \end{bmatrix}\begin{bmatrix} I & U+S_{7} \\ 0 & I \end{bmatrix}\begin{bmatrix} I & 0 \\ S_{6} & I \end{bmatrix}\cdots\begin{bmatrix} I & 0 \\ S_{2} & I \end{bmatrix}\begin{bmatrix} I & S_{1} \\ 0 & I \end{bmatrix},
\end{split}
\end{equation*}
thus $H$ has been decomposed into 9 unit triangular symplectic matrices.
\end{proof}
\begin{remark}
This factorization starts with an upper triangular matrix in right-hand side. In fact, if we decompose the $H^{T}$ into the above 9 factors, subsequently obtain an expression of $H$ starting with a lower triangular matrix by transposition.
\end{remark}

Theorem \ref{thm:fac_thm} shows an elegant expression of symplectic matrix, and it implies several important properties.
\begin{corollary} \label{cor:det_one}
The determinant of any symplectic matrix is one.
\end{corollary}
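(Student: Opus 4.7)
The plan is to reduce Corollary \ref{cor:det_one} to a routine determinant computation via Theorem \ref{thm:fac_thm}. Since the unit triangular factorization already expresses every symplectic matrix as a product of at most nine unit triangular symplectic factors, and the determinant is multiplicative, it suffices to compute the determinant of each factor and then take the product.

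First I would observe that for any symmetric $S \in \R^{d\times d}$, the two shapes of unit triangular symplectic matrices,
\[
\begin{bmatrix} I & S \\ 0 & I \end{bmatrix}, \qquad \begin{bmatrix} I & 0 \\ S & I \end{bmatrix},
\]
are block upper (respectively block lower) triangular with $I_d$ on each diagonal block, so each has determinant $\det(I_d)\cdot\det(I_d) = 1$. Invoking Theorem \ref{thm:fac_thm} to write an arbitrary $H \in SP$ as $H = L_1 L_2 \cdots L_k$ with $k \leq 9$ and each $L_i$ a unit triangular symplectic matrix, multiplicativity of the determinant immediately yields $\det(H) = \prod_{i=1}^{k} \det(L_i) = 1$.

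There is essentially no obstacle at this stage: all the substantive work was already carried out in Theorem \ref{thm:nonsin_thm}, Lemma \ref{lem:unit_ULU}, and Lemma \ref{lem:decomp}. This short derivation is in fact precisely the payoff that motivated the factorization. Classical treatments of this fact either proceed through characteristic-polynomial or Pfaffian arguments, or else must introduce symplectic interchange and permutation factors in order to reduce to the nonsingular-left-upper-block case; here, by contrast, the unit triangular factors have determinant one for free, and no ambiguous sign can survive the product.
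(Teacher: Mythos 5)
Your proposal is correct and is exactly the paper's argument: the paper's one-line proof ("the determinant of any unit triangular symplectic matrix is one") implicitly relies on Theorem \ref{thm:fac_thm} and multiplicativity of the determinant, which you have simply spelled out. No further comparison is needed.
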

\begin{proof}
The determinant of any unit triangular symplectic matrix is one.
\end{proof}
There are many proofs for the determinant of symplectic matrix \cite{bunger2017yet,mackey2003determinant}. Compared to the conventional proofs for the determinant of symplectic matrix, such as QR-like factorization, SVD-like factorization and transvections factorization, Theorem \ref{thm:fac_thm} is the most symplectic-determinant-revealing and elementary.

\begin{corollary} \label{cor:path_con}
The matrix symplectic group $SP$ is path connected.
\end{corollary}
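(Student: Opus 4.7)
The plan is to reduce path connectedness to the already-proven unit triangular factorization, so the substantive work happens entirely at the level of the elementary factors in $\mathcal{L}_{9}$.

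First I would handle a single unit triangular symplectic matrix. For symmetric $S\in\R^{d\times d}$ and $t\in[0,1]$, the scaled block $tS$ is again symmetric, so by Property \ref{pro:smat_cond} the matrix $\begin{bmatrix} I & tS \\ 0 & I \end{bmatrix}$ lies in $SP$ for every $t$. The map $t\mapsto \begin{bmatrix} I & tS \\ 0 & I \end{bmatrix}$ is then a continuous path in $SP$ from $I_{2d}$ to $\begin{bmatrix} I & S \\ 0 & I \end{bmatrix}$, and symmetrically for a lower triangular factor. Thus every element of $\mathcal{L}_{1}\cup\mathcal{L}_{1}^{T}$ is joined to the identity by a path that stays inside the symplectic group.

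Next I would bootstrap to a general $H\in SP$. By Theorem \ref{thm:fac_thm} there exist unit triangular symplectic matrices $L_{1},\dots,L_{9}$ with $H=L_{1}L_{2}\cdots L_{9}$. For each $i$, let $\gamma_{i}:[0,1]\to SP$ be the linear interpolation constructed above, so that $\gamma_{i}(0)=I_{2d}$ and $\gamma_{i}(1)=L_{i}$. Define $\gamma(t)\coloneqq \gamma_{1}(t)\gamma_{2}(t)\cdots\gamma_{9}(t)$. Closure of $SP$ under multiplication (Property \ref{pro:smat_trans}) forces $\gamma(t)\in SP$ for every $t$, while continuity of $\gamma$ follows from continuity of matrix multiplication. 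Since $\gamma(0)=I_{2d}$ and $\gamma(1)=H$, the identity is joined to $H$ by a path inside $SP$; as $H$ was arbitrary, $SP$ is path connected.

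There is essentially no hard step here: all of the geometric content was absorbed into Theorem \ref{thm:fac_thm}, which guarantees that $SP$ is generated, in a \emph{finite} and hence continuously deformable way, by the convex sets of symmetric matrices sitting in each triangular slot. The only thing to be mildly careful about is that one must deform each factor \emph{within} $SP$ (hence the use of $tS$ symmetric rather than some ad hoc interpolation of $L_{i}$ with $I_{2d}$ that might leave the group), and that the pointwise product of the nine paths is again symplectic, which is immediate from Property \ref{pro:smat_trans}.
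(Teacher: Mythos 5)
Your proposal is correct and follows essentially the same route as the paper: both rely on Theorem \ref{thm:fac_thm} and then linearly interpolate the symmetric blocks of the nine unit triangular factors, the only cosmetic difference being that you join an arbitrary $H$ to $I_{2d}$ (taking blocks $tS_i$) while the paper joins two arbitrary matrices directly (taking blocks $(1-t)S_i+tT_i$). Both arguments are valid and of the same substance.
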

\begin{proof}
Given two symplectic matrices $H,G\in SP$, we can express them as
\begin{equation*}
\begin{split}
H&=\begin{bmatrix} I & S_{9} \\ 0 & I \end{bmatrix}\cdots \begin{bmatrix} I & 0 \\ S_{2} & I \end{bmatrix}\begin{bmatrix} I & S_{1} \\ 0 & I \end{bmatrix} \\
G&=\begin{bmatrix} I & T_{9} \\ 0 & I \end{bmatrix}\cdots \begin{bmatrix} I & 0 \\ T_{2} & I \end{bmatrix}\begin{bmatrix} I & T_{1} \\ 0 & I \end{bmatrix} \\
\end{split}
\end{equation*}
where $S_{i},T_{i}$ are symmetric, due to Theorem \ref{thm:fac_thm}. Define the continuous $\gamma:[0,1]\rightarrow SP$ as
\begin{equation*}
\gamma(t)=\begin{bmatrix} I & (1-t)\cdot S_{9}+t\cdot T_{9} \\ 0 & I \end{bmatrix}\cdots \begin{bmatrix} I & 0 \\ (1-t)\cdot S_{2}+t\cdot T_{2} & I \end{bmatrix}\begin{bmatrix} I & (1-t)\cdot S_{1}+t\cdot T_{1} \\ 0 & I \end{bmatrix},
\end{equation*}
then we have $\gamma(0)=H$ and $\gamma(1)=G$, thus $SP$ is path connected.
\end{proof}
\begin{corollary} \label{cor:generators}
Denote $SG=\mathcal{L}_{1}\cup \mathcal{L}_{1}^{T}$ where $\mathcal{L}_{1}^{T}\coloneqq\{H^{T}|H\in \mathcal{L}_{1}\}$, then $SG$ is a set of generators of the group $SP$.
\end{corollary}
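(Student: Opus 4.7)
The plan is to recognize this corollary as a near-immediate consequence of Theorem \ref{thm:fac_thm}. To show $SG$ generates $SP$ as a group, I need to establish two containments: first, that $SG \subseteq SP$, so that the subgroup generated by $SG$ lies inside $SP$; and second, that every element of $SP$ can be written as a finite product of elements from $SG$.

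The first containment is immediate from Property \ref{pro:smat_cond}: any matrix of the form $\begin{bmatrix} I & S \\ 0 & I \end{bmatrix}$ or $\begin{bmatrix} I & 0 \\ S & I \end{bmatrix}$ with $S^T=S$ is symplectic, so $\mathcal{L}_1 \subseteq SP$ and $\mathcal{L}_1^T \subseteq SP$ (the latter using closure under transposition from Property \ref{pro:smat_trans}). The second containment is exactly the content of Theorem \ref{thm:fac_thm}: any $H \in SP$ admits a factorization
\begin{equation*}
H = \begin{bmatrix} I & S_{9} \\ 0 & I \end{bmatrix} \begin{bmatrix} I & 0 \\ S_{8} & I \end{bmatrix} \cdots \begin{bmatrix} I & 0 \\ S_{2} & I \end{bmatrix} \begin{bmatrix} I & S_{1} \\ 0 & I \end{bmatrix},
\end{equation*}
where each $S_i$ is symmetric. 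Each factor on the right-hand side belongs to $\mathcal{L}_1$ or $\mathcal{L}_1^T$, hence to $SG$.

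Combining the two directions, every symplectic matrix is expressible as a finite product of elements of $SG$, so $SP \subseteq \langle SG \rangle$; and since $SG \subseteq SP$ and $SP$ is a group, $\langle SG \rangle \subseteq SP$. Therefore $\langle SG \rangle = SP$, i.e., $SG$ is a generating set for $SP$. There is no substantive obstacle, since the heavy lifting was already performed in the proof of Theorem \ref{thm:fac_thm}; the only thing worth remarking is that we do not even need to invoke inverses in the generating process, since $\mathcal{L}_1$ and $\mathcal{L}_1^T$ are each closed under inversion (the inverse of $\begin{bmatrix} I & S \\ 0 & I \end{bmatrix}$ is $\begin{bmatrix} I & -S \\ 0 & I \end{bmatrix}$, again with symmetric off-diagonal block).
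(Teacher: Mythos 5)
Your proof is correct and is essentially the paper's argument: the paper's own proof is the one-line remark that the corollary is immediate from Theorem \ref{thm:fac_thm}, and you have simply written out the two containments that make this precise. Nothing is missing and nothing differs in substance.
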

\begin{proof}
It is obvious by Theorem \ref{thm:fac_thm}.
\end{proof}
\begin{remark}
Although Corollary \ref{cor:det_one}, \ref{cor:path_con} are well-known and can be found in many literatures such as \cite{de2006symplectic,feng2010symplectic}, here we provide the alternative and concise proofs which are deduced directly from the unit triangular factorization. It is noteworthy that Corollary \ref{cor:generators} is indeed a new feature, while the existing theory of generators of $SP$ requires the factors of diagonal symplectic matrices (\ref{eq:dia_units}). For example, \cite[p. 49, Corollary 2.40]{de2006symplectic} states that $\{J\}\cup\{unit\ lower\ triangular\ symplectic\ matrices\}\cup\{diagonal\ symplectic\ matrices\}$ is a set of generators of $SP$. In fact, Corollary \ref{cor:generators} indicates that $\{diagonal\ symplectic\ matrices\}$ can be removed from above statement.
\end{remark}

Up to now, we have studied several properties of the general symplectic matrices. Next we focus on the structured sets of symplectic matrices.
\subsection{Symplectic matrices with nonsingular left upper block}
For symplectic matrices with other structures, which are naturally subsets of $SP$, we can exploit their special structures for reducing 9 to a smaller number. Here we turn to the desirable symplectic matrices which have nonsingular left upper blocks. Denote
\begin{equation*}
SPN=\left\{H=\begin{bmatrix} A & \star \\ \star & \star \end{bmatrix}\in SP\Bigg|A\in\R^{d\times d}\ nonsingular\right\}.
\end{equation*}
We have proved that $SP=\mathcal{L}_{9}$, furthermore, it is easy to show that $SPN\subsetneq \mathcal{L}_{8}$.
\begin{theorem}
$SPN \subsetneq \mathcal{L}_{8}$. Thus any symplectic matrix with nonsingular left upper block can be factored into no more than $8$ unit triangular symplectic matrices.
\end{theorem}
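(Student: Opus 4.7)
The plan is to adapt the proof of Theorem \ref{thm:fac_thm}, taking advantage of the fact that an element of $SPN$ already has a nonsingular left upper block, so that the preliminary step from Theorem \ref{thm:nonsin_thm}, whose sole role in the general proof is to produce such a block, can be skipped. Since that step contributes exactly one unit upper triangular factor to the total count of nine, its omission should yield a factorization into eight unit triangular matrices.

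Concretely, given $H\in SPN$, I would apply the LDU identity (\ref{eq:right_LDU}) of Property \ref{pro:LDU} directly to write
\begin{equation*}
H=\begin{bmatrix} I & 0 \\ S & I \end{bmatrix}\begin{bmatrix} I & T \\ 0 & I \end{bmatrix}\begin{bmatrix} P & 0 \\ 0 & P^{-T} \end{bmatrix}
\end{equation*}
with $S,T$ symmetric and $P$ nonsingular, and then decompose the diagonal factor using the first form of Lemma \ref{lem:decomp}, whose leftmost piece is unit upper triangular. The resulting nine-fold product contains two adjacent unit upper triangular factors, namely the upper factor $\begin{bmatrix} I & T \\ 0 & I \end{bmatrix}$ from the LDU step and the leftmost factor $\begin{bmatrix} I & S_{7} \\ 0 & I \end{bmatrix}$ from the diagonal decomposition, which combine into a single $\begin{bmatrix} I & T+S_{7} \\ 0 & I \end{bmatrix}$. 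The remaining eight factors alternate between lower and upper triangular form, matching the pattern of $\mathcal{L}_{8}$, so $H\in\mathcal{L}_{8}$ and hence $SPN\subseteq\mathcal{L}_{8}$.

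For the strict inclusion, I would exhibit an explicit element of $\mathcal{L}_{8}$ outside $SPN$. A direct computation gives
\begin{equation*}
\begin{bmatrix} I & -I \\ 0 & I \end{bmatrix}\begin{bmatrix} I & 0 \\ I & I \end{bmatrix}\begin{bmatrix} I & -I \\ 0 & I \end{bmatrix}=\begin{bmatrix} 0 & -I \\ I & 0 \end{bmatrix},
\end{equation*}
so this matrix lies in $\mathcal{L}_{3}\subseteq\mathcal{L}_{8}$ (the chain $\mathcal{L}_{m}\subset\mathcal{L}_{n}$ for $m\leq n$ was already noted by padding absent parameters with zero), while its zero left upper block places it outside $SPN$.

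The only non-automatic issue is a bookkeeping one, namely verifying that the two factors to be merged really are of the same triangular type and that the alternation of the remaining eight factors is preserved. This is settled by the matching choice of the ``right LDU'' form (\ref{eq:right_LDU}) and the upper-leading form of Lemma \ref{lem:decomp}; no genuine obstacle arises.
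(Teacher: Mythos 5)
Your proof is correct and follows essentially the same route as the paper's: skip Theorem \ref{thm:nonsin_thm}, apply the LDU factorization of Property \ref{pro:LDU} directly, expand the diagonal factor via Lemma \ref{lem:decomp}, and absorb the leftover unit upper triangular factor into an adjacent one (the paper merges at the right end using the center LDU form, you merge at the left using the right LDU form; these coincide up to Remark \ref{rem:moving}). Your separating example $-J\in\mathcal{L}_{3}\setminus SPN$ is the mirror of the paper's choice of $J$, and both computations check out.
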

\begin{proof}
With the LDU factorization and Lemma \ref{lem:decomp}, we know
\begin{equation*}
\begin{split}
H=\begin{bmatrix} A & \star \\ \star & \star \end{bmatrix}&=\begin{bmatrix} I & 0 \\ S & I \end{bmatrix}\begin{bmatrix} A & 0 \\ 0 & A^{-T} \end{bmatrix}\begin{bmatrix} I & T \\ 0 & I \end{bmatrix} \\
&=\begin{bmatrix} I & 0 \\ S & I \end{bmatrix}\begin{bmatrix} I & S_{7} \\ 0 & I \end{bmatrix}\cdots \begin{bmatrix} I & 0 \\ S_{2} & I \end{bmatrix}\begin{bmatrix} I & S_{1}+T \\ 0 & I \end{bmatrix},
\end{split}
\end{equation*}
so that $SPN\subset \mathcal{L}_{8}$. Moreover, consider $J=\begin{bmatrix} I & I \\ 0 & I \end{bmatrix}\begin{bmatrix} I & 0 \\ -I & I \end{bmatrix}\begin{bmatrix} I & I \\ 0 & I \end{bmatrix}$, we have $J\in \mathcal{L}_{3}\subset \mathcal{L}_{8}$ and $J\notin SPN$, hence $SPN \subsetneq \mathcal{L}_{8}$.
\end{proof}
Until now, we have $SPN\subsetneq \mathcal{L}_{8}\subset \mathcal{L}_{9}=SP$, however, it is not sure whether $\mathcal{L}_{8}=\mathcal{L}_{9}$ or $\mathcal{L}_{8}\subsetneq \mathcal{L}_{9}$. \cite{dopico2009parametrization} provides a proof that $SPN$ is a dense subset in the group of symplectic matrices, although it can be accomplished through general properties of algebraic manifolds. Here we present a simple proof again according to Theorem \ref{thm:nonsin_thm}, and this result immediately shows that $\mathcal{L}_{8}$ is at least dense in $\mathcal{L}_{9}=SP$.
\begin{corollary}
$SPN$ is dense in $SP$, i.e., $\overline{SPN}=SP$.
\end{corollary}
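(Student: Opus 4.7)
The plan is to use Theorem \ref{thm:nonsin_thm} essentially off the shelf: given any $H\in SP$ (whose left upper block may well be singular), that theorem supplies a fixed symmetric $S\in\R^{d\times d}$ such that the matrix
\begin{equation*}
H_{\lambda}\coloneqq \begin{bmatrix} I & -\lambda S \\ 0 & I \end{bmatrix}H
\end{equation*}
has a nonsingular left upper block $P_{\lambda}$ for every $\lambda\neq 0$. Since $H_{\lambda}$ is a product of two symplectic matrices it lies in $SP$, and by construction $H_{\lambda}\in SPN$ whenever $\lambda\neq 0$.

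Next I would let $\lambda\to 0$. The map $\lambda\mapsto H_{\lambda}$ is continuous (indeed affine) in $\lambda$, and $H_{0}=H$, so $H_{\lambda}\to H$ as $\lambda\to 0$. Hence $H\in\overline{SPN}$, giving $SP\subseteq\overline{SPN}$. The reverse inclusion is free: $SPN\subseteq SP$ and $SP$ is closed in $\R^{2d\times 2d}$, being the zero set of the polynomial system $H^{T}JH-J=0$ from Definition \ref{def:SP}. Combining the two inclusions yields $\overline{SPN}=SP$.

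There is essentially no obstacle here, since all of the work has already been done by Theorem \ref{thm:nonsin_thm}: the nontrivial content is precisely the fact that a single symmetric $S$ can be chosen which cures the singularity of the left upper block for every nonzero scaling $\lambda$. Once that is in hand, density reduces to noting that $H_{\lambda}$ depends continuously on $\lambda$ and agrees with $H$ at $\lambda=0$. The only minor point worth flagging in the write-up is the closedness of $SP$, which is immediate from its definition as an algebraic variety and justifies reducing $\overline{SPN}=SP$ to the one-sided inclusion $SP\subseteq\overline{SPN}$.
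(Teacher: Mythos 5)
Your proposal is correct and follows exactly the paper's argument: apply Theorem \ref{thm:nonsin_thm} to write $\begin{bmatrix} I & -\lambda S \\ 0 & I \end{bmatrix}H = \begin{bmatrix} P_{\lambda} & \star \\ \star & \star \end{bmatrix}\in SPN$ for $\lambda\neq 0$ and let $\lambda\to 0$. Your explicit remark that $SP$ is closed (so the reverse inclusion is automatic) is a small point the paper leaves implicit, but otherwise the two proofs are the same.
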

\begin{proof}
Theorem \ref{thm:nonsin_thm} shows that, for any $H\in SP$, there exists a symmetric $S$ such that
\begin{equation*}
    H=\begin{bmatrix} I & \lambda S \\ 0 & I \end{bmatrix}\begin{bmatrix} P_{\lambda} & \star \\ \star & \star \end{bmatrix}
\end{equation*}
holds with a nonsingular $P_{\lambda}$ for all $\lambda\neq0$. Therefore
\begin{equation*}
    H=\lim_{\lambda\rightarrow 0}\begin{bmatrix} I & -\lambda S \\ 0 & I \end{bmatrix}H=\lim_{\lambda\rightarrow 0}\begin{bmatrix} P_{\lambda} & \star \\ \star & \star \end{bmatrix}\in \overline{SPN},
\end{equation*}
which means $SP\subset \overline{SPN}$, consequently $SP=\overline{SPN}$.
\end{proof}
\begin{corollary}
$\mathcal{L}_{8}$ is dense in $\mathcal{L}_{9}$, i.e., $\overline{\mathcal{L}_{8}}=\mathcal{L}_{9}$.
\end{corollary}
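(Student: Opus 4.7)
The plan is to build the corollary directly on top of the two facts just established: the previous theorem gives the inclusion $SPN\subset \mathcal{L}_{8}$, and the preceding corollary gives $\overline{SPN}=SP$. Combined with the main factorization theorem $\mathcal{L}_{9}=SP$ and the elementary observation that $\mathcal{L}_{8}\subset\mathcal{L}_{9}$ holds by definition, the problem reduces to a short chain-of-inclusions argument in the ambient space $\R^{2d\times 2d}$.

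First I would note that $SP$ is a closed subset of $\R^{2d\times 2d}$, since it is the preimage of $\{J\}$ under the continuous map $H\mapsto H^{T}JH$; in particular $\mathcal{L}_{9}=SP$ is closed. Next I would apply the closure operator to the chain
\begin{equation*}
SPN\subset\mathcal{L}_{8}\subset\mathcal{L}_{9}=SP,
\end{equation*}
obtaining
\begin{equation*}
SP=\overline{SPN}\subset\overline{\mathcal{L}_{8}}\subset\overline{\mathcal{L}_{9}}=\overline{SP}=SP,
\end{equation*}
which forces $\overline{\mathcal{L}_{8}}=SP=\mathcal{L}_{9}$ and proves the corollary.

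There is no real obstacle here, since every ingredient has already been supplied: the density of $SPN$ in $SP$, the containment $SPN\subset\mathcal{L}_{8}$, and the identity $\mathcal{L}_{9}=SP$. The only point that deserves a line of justification is that $SP$ (hence $\mathcal{L}_{9}$) is closed, and this is immediate from the polynomial nature of the defining equation $H^{T}JH=J$. Thus the proof can be written in just a few lines, essentially as a squeezing of $\mathcal{L}_{8}$ between the dense subset $SPN$ and the closed set $SP$.
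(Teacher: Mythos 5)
Your argument is correct and is essentially the same as the paper's, which simply invokes the density of $SPN$ in $SP$ together with the sandwich $SPN\subset\mathcal{L}_{8}\subset\mathcal{L}_{9}=SP$; you have merely filled in the details (including the closedness of $SP$) that the paper leaves implicit.
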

\begin{proof}
$SPN$ is dense in $SP$.
\end{proof}

\subsection{Positive definite symplectic matrices}
Denote the collection of symmetric positive definite symplectic matrices by
\begin{equation*}
SPP=\{H\in SP| H\ symmetric\ positive\ definite \}.
\end{equation*}
\begin{theorem}\label{thm:spp_thm}
$SPP \subsetneq \mathcal{L}_{4}$. Thus any symmetric positive definite symplectic matrix can be factored into no more than $4$ unit triangular symplectic matrices.
\end{theorem}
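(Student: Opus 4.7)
My plan is to combine the symmetric LDU decomposition (Property~\ref{pro:LDU}) with an explicit four-factor expression for the resulting diagonal symplectic matrix, and then collapse two pairs of adjacent like-type factors. Since $H\in SPP$ is symmetric positive definite, its top-left block $A_1$ is symmetric positive definite, hence nonsingular, so the center LDU factorization (\ref{eq:center_LDU}) applies. Using $H=H^T$, uniqueness of the LDU decomposition forces $T_2=S_2$ and $A_1=A_1^T$, giving
\begin{equation*}
H \;=\; L D L^T, \qquad L=\begin{bmatrix} I & 0 \\ S & I \end{bmatrix}, \qquad D=\begin{bmatrix} A_1 & 0 \\ 0 & A_1^{-1} \end{bmatrix},
\end{equation*}
with $S=A_2 A_1^{-1}$ symmetric and $A_1$ symmetric positive definite.

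The heart of the proof is a four-factor $\mathcal{L}_4$-decomposition of $D$. Because $A_1$ is symmetric, choosing
\begin{equation*}
S_1=I,\quad S_2=A_1^{-1}-I,\quad S_3=-A_1,\quad S_4=A_1^{-1}-A_1^{-2}
\end{equation*}
yields four symmetric matrices, and a direct block multiplication verifies
\begin{equation*}
D \;=\; \begin{bmatrix} I & 0 \\ S_4 & I \end{bmatrix}\begin{bmatrix} I & S_3 \\ 0 & I \end{bmatrix}\begin{bmatrix} I & 0 \\ S_2 & I \end{bmatrix}\begin{bmatrix} I & S_1 \\ 0 & I \end{bmatrix} \;\in\;\mathcal{L}_4.
\end{equation*}
Substituting into $H=LDL^T$ produces six unit triangular symplectic factors in the pattern lower-lower-upper-lower-upper-upper. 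The leftmost two lower factors merge (their off-diagonal blocks simply add, and the sum of symmetric matrices is symmetric), as do the rightmost two upper factors, leaving exactly four alternating factors. Hence $H\in\mathcal{L}_4$.

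For the strict inclusion, any nonzero symmetric $S$ gives $\begin{bmatrix} I & S \\ 0 & I \end{bmatrix}\in\mathcal{L}_1\subset\mathcal{L}_4$, and this matrix is not symmetric, so it is not in $SPP$. The main obstacle is guessing the right four-factor ansatz for $D$: three factors can be ruled out because any three-fold alternating product of unit triangular symplectic matrices forces the $(1,1)$ or $(2,2)$ block to equal $I$, which is incompatible with $D$ unless $A_1=I$; arriving at the symmetric quadruple $S_1,\dots,S_4$ above requires a bit of trial and error, but once it is in hand the verification and the merging step are purely mechanical.
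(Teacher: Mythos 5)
Your proposal is correct and follows essentially the same route as the paper: reduce $H\in SPP$ to the symmetric LDU form $LDL^{T}$ using the nonsingularity of the positive definite block $A_{1}$, write the diagonal symplectic factor $D$ as an alternating product of four unit triangular symplectic matrices (your explicit quadruple $S_{1}=I$, $S_{2}=A_{1}^{-1}-I$, $S_{3}=-A_{1}$, $S_{4}=A_{1}^{-1}-A_{1}^{-2}$ checks out and differs from the paper's choice only in the specific symmetric blocks), and merge the adjacent like-type factors on each end to land in $\mathcal{L}_{4}$. Your witness for strictness (a non-symmetric unit triangular matrix) is just as valid as the paper's choice of $J$.
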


\begin{proof}
If $H$ is symmetric positive definite, then all  principal submatrices of $H$ are symmetric positive definite. Denote that $H=\begin{bmatrix} H_{1} & H_{2} \\ H_{3} & H_{4} \end{bmatrix}\in SPP$, here $H_1$ is positive definite, thus nonsingular. So that in the process of decomposition, instead of going through Theorem \ref{thm:nonsin_thm}, we just apply the LDU factorization to H, and  get
\begin{equation*}
    H=\begin{bmatrix} I & 0 \\ H_3H_1^{-1} & I \end{bmatrix}\begin{bmatrix} H_1 & 0 \\ 0 & H_1^{-T} \end{bmatrix}\begin{bmatrix} I & H_1^{-1}H_2 \\ 0 & I \end{bmatrix},
\end{equation*}
where $H_3H_1^{-1}$ and $H_1^{-1}H_2$ are symmetric matrices. Moreover, because of the symmetry of $H$, $H_3H_1^{-1}=(H_1^{-1}H_2)^T=H_1^{-1}H_2$. Therefore, every positive definite symplectic matrix can be written as
\begin{equation} \label{eq:SPP_LDU}
    H=\begin{bmatrix} I & 0 \\ S & I \end{bmatrix}\begin{bmatrix} P & 0 \\ 0 & P^{-T} \end{bmatrix}\begin{bmatrix} I & S \\ 0 & I \end{bmatrix}
\end{equation}
with $P$ symmetric positive definite.
Since $P$ is already symmetric here, we do not have to decompose it into the product of two symmetric matrices, and just have the factorization
\begin{equation*}
\begin{bmatrix} P & 0 \\ 0 & P^{-T} \end{bmatrix}=\begin{bmatrix} I & 0 \\ -P^{-1} & I \end{bmatrix}\begin{bmatrix} I & P-I \\ 0 & I \end{bmatrix}\begin{bmatrix} I & 0 \\ I & I \end{bmatrix}\begin{bmatrix} I & P^{-1}-I \\ 0 & I \end{bmatrix},
\end{equation*}
as a consequence
\begin{equation*}
\begin{split}
    H&=\begin{bmatrix} I & 0 \\ S & I \end{bmatrix}\begin{bmatrix} I & 0 \\ -P^{-1} & I \end{bmatrix}\begin{bmatrix} I & P-I \\ 0 & I \end{bmatrix}\begin{bmatrix} I & 0 \\ I & I \end{bmatrix}\begin{bmatrix} I & P^{-1}-I \\ 0 & I \end{bmatrix}\begin{bmatrix} I & S \\ 0 & I \end{bmatrix}\\
    &=\begin{bmatrix} I & 0 \\ S-P^{-1} & I \end{bmatrix}\begin{bmatrix} I & P-I \\ 0 & I \end{bmatrix}\begin{bmatrix} I & 0 \\ I & I \end{bmatrix}\begin{bmatrix} I & S+P^{-1}-I \\ 0 & I \end{bmatrix},
\end{split}
\end{equation*}
thus $H\in \mathcal{L}_{4}$.

On the other hand, $J=\begin{bmatrix} I & I \\ 0 & I \end{bmatrix}\begin{bmatrix} I & 0 \\ -I & I \end{bmatrix}\begin{bmatrix} I & I \\ 0 & I \end{bmatrix}\in \mathcal{L}_{3}\subset \mathcal{L}_{4}$ while $J\notin SPP$, hence $SPP\subsetneq \mathcal{L}_{4}$.
\end{proof}
\begin{remark}
The above factorization starts with an upper triangular matrix in right-hand side. Similar to the proof of Theorem \ref{thm:spp_thm} , we can use the LDU factorization in the case of $H_4$ invertible,
\begin{equation}\label{eq:LDU2}
    H=\begin{bmatrix} I & S \\ 0 & I \end{bmatrix}\begin{bmatrix} P & 0 \\ 0 & P^{-T} \end{bmatrix}\begin{bmatrix} I & 0 \\ S & I \end{bmatrix},
\end{equation}
 to obtain another decomposition where the first one is a lower triangular matrix.
\end{remark}

Going a step further, we can prove that $\mathcal{L}_{4}$ is optimal, i.e., $SPP\nsubseteq \mathcal{L}_{3}$.
\begin{theorem} \label{thm:spp_l3}
$SPP\nsubseteq \mathcal{L}_{3}$.
\end{theorem}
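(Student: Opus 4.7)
The plan is to exhibit a very simple member of $SPP$ and rule out its membership in $\mathcal{L}_{3}$ by inspecting just two blocks of the product.

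First I would multiply out a generic element of $\mathcal{L}_{3}$:
\[
\begin{bmatrix} I & S_{3} \\ 0 & I \end{bmatrix}\begin{bmatrix} I & 0 \\ S_{2} & I \end{bmatrix}\begin{bmatrix} I & S_{1} \\ 0 & I \end{bmatrix}=\begin{bmatrix} I+S_{3}S_{2} & \star \\ S_{2} & \star \end{bmatrix}.
\]
The structural fact I will use is that the lower-left block of any element of $\mathcal{L}_{3}$ is exactly $S_{2}$, while the upper-left block is exactly $I+S_{3}S_{2}$, with $S_{2},S_{3}$ symmetric.

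Next I would take the diagonal matrix $H=\begin{bmatrix} 2I_{d} & 0 \\ 0 & \tfrac{1}{2}I_{d} \end{bmatrix}$. By Property \ref{pro:smat_cond}(iii) (with $P=2I_{d}$) it is symplectic; it is also manifestly symmetric and positive definite (spectrum $\{2,\tfrac{1}{2}\}$), so $H\in SPP$. If $H$ were an element of $\mathcal{L}_{3}$, then matching the $(2,1)$ block against the expansion above would force $S_{2}=0$; the $(1,1)$ block would then collapse to $I$, whereas the $(1,1)$ block of $H$ is $2I_{d}\neq I$. This contradiction gives $H\in SPP\setminus\mathcal{L}_{3}$, so $SPP\nsubseteq\mathcal{L}_{3}$.

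There is no real obstacle; the whole argument is a routine block expansion together with the observation that the rigid shape of the $(2,1)$ and $(1,1)$ entries of $\mathcal{L}_{3}$ obstructs every scalar-diagonal member of $SPP$ except the identity. The only point that requires mild care is to check the alternation pattern of $\mathcal{L}_{3}$ (upper--lower--upper, reading left to right) so that the middle, lower-triangular factor's parameter really is the $(2,1)$ block of the whole product.
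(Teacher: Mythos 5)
Your proof is correct and is essentially identical to the paper's: the paper also takes $G=\begin{bmatrix} 2I & 0 \\ 0 & \frac{1}{2}I \end{bmatrix}\in SPP$, expands the generic upper--lower--upper product in $\mathcal{L}_{3}$, forces $S_{2}=0$ from the $(2,1)$ block, and derives the contradiction $I=2I$ from the $(1,1)$ block. Nothing further is needed.
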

\begin{proof}
Let $G=\begin{bmatrix} 2I & 0 \\ 0 & \frac{1}{2}I \end{bmatrix}\in SPP$. If there exist $S_{1},S_{2},S_{3}$ such that
\begin{equation*}
G=\begin{bmatrix} I & S_{3} \\ 0 & I \end{bmatrix}\begin{bmatrix} I & 0 \\ S_{2} & I \end{bmatrix}\begin{bmatrix} I & S_{1} \\ 0 & I \end{bmatrix}=\begin{bmatrix} I+S_{3}S_{2} & S_{1}+S_{3}S_{2}S_{1}+S_{3} \\ S_{2} & I+S_{2}S_{1} \end{bmatrix},
\end{equation*}
then we have $S_{2}=0$, $I=2I$, which is a contradiction.
\end{proof}

\subsection{Symplectic M-matrices}
M-matrices occur very often in a wide variety of areas including finite difference methods for partial differential equations, economics, probability and statistics \cite{berman1994nonnegative}. There are many equivalent definitions of M-matrices, we adopt the following one.
\begin{definition} \label{def:M-matrices}
$A\in \R^{n\times n}$ is an M-matrix if $a_{ij}\leq0$ for $i \neq j$ and $Re(\lambda)>0$ for every eigenvalue $\lambda$ of $A$.
\end{definition}
The next result is a factorization of the symplectic M-matrices. It appears in \cite{dopico2009parametrization} and is the key result on how symplectic M-matrices are decomposed into the product of several unit triangular symplectic matrices.
\begin{theorem} \label{thm:fac_M}
The set of $2d$-by-$2d$ symplectic M-matrices is
\begin{equation*}
SPM=\left\{\begin{bmatrix} I & 0 \\ H & I \end{bmatrix}\begin{bmatrix} D & 0 \\ 0 & D^{-1} \end{bmatrix}\begin{bmatrix} I & K \\ 0 & I \end{bmatrix}\Bigg|\begin{aligned}&D \in \R^{d\times d}\ positive \ diagonal \\ &H=H^{T}\leq 0 \\ &K=K^{T}\leq 0 \\ &HDK \ diagonal \end{aligned}\right\},
\end{equation*}
where the inequalities $H \leq 0$ and $K \leq 0$ mean that $h_{ij} \leq 0$ and $k_{ij} \leq 0$ for all $i,j$.
\end{theorem}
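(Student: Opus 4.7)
The plan is to prove the set equality by two inclusions, using the inverse-positivity characterization of M-matrices (a matrix with non-positive off-diagonal entries is an M-matrix iff its inverse is entry-wise non-negative, see \cite{berman1994nonnegative}) together with the clean identity $M^{-1}=-JM^{T}J$ for symplectic $M$. This combination drives the entire argument: it forces the left upper and right lower blocks of any symplectic M-matrix to be diagonal, after which the LDU factorization of Property \ref{pro:LDU} finishes the job.

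For the inclusion $\supseteq$, take any $M$ of the stated form. Symplecticity is immediate since each of the three factors is symplectic. Expanding the product yields $M=\begin{bmatrix} D & DK \\ HD & HDK+D^{-1} \end{bmatrix}$, and I would verify (i) every off-diagonal entry of $M$ is $\le 0$ using diagonality of $D$, $D^{-1}$ and $HDK$ together with $D>0$ diagonal and $H,K\le 0$; and (ii) $M^{-1}\ge 0$ entry-wise via the dual factorization
\begin{equation*}
M^{-1}=\begin{bmatrix} I & -K \\ 0 & I \end{bmatrix}\begin{bmatrix} D^{-1} & 0 \\ 0 & D \end{bmatrix}\begin{bmatrix} I & 0 \\ -H & I \end{bmatrix}=\begin{bmatrix} D^{-1}+HDK & -KD \\ -DH & D \end{bmatrix},
\end{equation*}
where I use $KDH=(HDK)^{T}=HDK$ because $HDK$ is diagonal. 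The only non-obvious block is the $(1,1)$ block, but its diagonal entries $d_{i}^{-1}+\sum_{j}d_{j}H_{ij}K_{ij}$ are sums of a positive term and products of a positive scalar with two non-positive numbers, hence non-negative.

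For the inclusion $\subseteq$, write $M=\begin{bmatrix} A_{1} & B_{1} \\ A_{2} & B_{2} \end{bmatrix}\in SPM$. The M-matrix property yields $A_{2},B_{1}\le 0$ entry-wise (they sit in off-diagonal blocks of $M$) and non-positive off-diagonals for $A_{1}$ and $B_{2}$. On the other hand, the symplectic inverse identity gives
\begin{equation*}
M^{-1}=-JM^{T}J=\begin{bmatrix} B_{2}^{T} & -B_{1}^{T} \\ -A_{2}^{T} & A_{1}^{T} \end{bmatrix},
\end{equation*}
which must be entry-wise non-negative, forcing $A_{1},B_{2}\ge 0$ entry-wise. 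Combining the two sign conditions makes the off-diagonals of $A_{1}$ and $B_{2}$ vanish, so both are diagonal; since $A_{1}$ is a principal submatrix of the M-matrix $M$ it is itself an M-matrix, hence has positive diagonal and is in particular nonsingular. Property \ref{pro:LDU} now applies and delivers the required factorization with $D=A_{1}$ positive diagonal (so $D^{-T}=D^{-1}$), $H=A_{2}A_{1}^{-1}$ and $K=A_{1}^{-1}B_{1}$; symmetry of $H,K$ comes from Property \ref{pro:LDU}, non-positivity from $A_{1}^{-1}$ being positive diagonal and $A_{2},B_{1}\le 0$, and finally $HDK=B_{2}-D^{-1}$ is a difference of diagonal matrices and hence diagonal. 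The main obstacle is precisely the rigidity claim that $A_{1}$ (and $B_{2}$) of any symplectic M-matrix must be diagonal, which is not obvious from the M-matrix definition nor from the symplectic conditions in isolation; the interplay above via $M^{-1}=-JM^{T}J$ and inverse-positivity is what makes it immediate.
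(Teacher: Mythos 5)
Your proof is correct, and it is worth noting that the paper does not actually prove Theorem \ref{thm:fac_M} at all --- it simply cites \cite{dopico2009parametrization} and uses the statement as a black box. So your argument is a genuine, self-contained addition rather than a variant of something in the text. Both halves check out. For $\supseteq$: the product expands to $\left[\begin{smallmatrix} D & DK \\ HD & HDK+D^{-1}\end{smallmatrix}\right]$, whose off-diagonal entries are indeed $\le 0$ (the diagonal blocks are diagonal matrices because $HDK$ is assumed diagonal), and your computation of $M^{-1}$ via the reversed factorization is right, including the identity $KDH=(HDK)^{T}=HDK$ and the sign analysis of the diagonal entries $d_i^{-1}+\sum_j d_j H_{ij}K_{ij}\ge 0$; combined with the standard equivalence ``Z-matrix with nonnegative inverse $\Leftrightarrow$ nonsingular M-matrix'' from \cite{berman1994nonnegative}, this matches Definition \ref{def:M-matrices}. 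For $\subseteq$: the identity $M^{-1}=-JM^{T}J=\left[\begin{smallmatrix} B_2^T & -B_1^T \\ -A_2^T & A_1^T\end{smallmatrix}\right]$ together with inverse-positivity does force $A_1,B_2\ge 0$, which against the Z-matrix condition makes them diagonal; positivity of the diagonal of $A_1$ follows since diagonal entries of a nonsingular M-matrix are positive, and then the middle form (\ref{eq:center_LDU}) of Property \ref{pro:LDU} delivers $H=A_2D^{-1}\le 0$, $K=D^{-1}B_1\le 0$, both symmetric, with $HDK=B_2-D^{-1}$ diagonal. The one place to be slightly more explicit in a final write-up is the invocation of inverse-positivity: state clearly that you are using the characterization of \emph{nonsingular} M-matrices (which is what Definition \ref{def:M-matrices} describes, since it demands $\mathrm{Re}(\lambda)>0$ rather than $\ge 0$), and give the precise reference in \cite{berman1994nonnegative}. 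With that citation pinned down, the argument is complete.
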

Here we treat $D$ as an ordinary symmetric matrix, as in Theorem \ref{thm:spp_thm}, any symplectic M-matrix can be decomposed as the product of four triangular blocks.
\begin{theorem}
$SPM \subsetneq \mathcal{L}_{4}$. Thus any symplectic M-matrix can be factored into no more than $4$ unit triangular symplectic matrices.
\end{theorem}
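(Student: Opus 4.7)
The plan is to start from the explicit factorization provided by Theorem~\ref{thm:fac_M}: every $M\in SPM$ can be written as
\begin{equation*}
M=\begin{bmatrix} I & 0 \\ H & I \end{bmatrix}\begin{bmatrix} D & 0 \\ 0 & D^{-1} \end{bmatrix}\begin{bmatrix} I & K \\ 0 & I \end{bmatrix},
\end{equation*}
where $D$ is positive diagonal (hence in particular symmetric). The key observation is that this is structurally the same shape as equation~(\ref{eq:SPP_LDU}) in the proof of Theorem~\ref{thm:spp_thm}, only with $S$ split into two distinct symmetric blocks $H$ and $K$. So I would reuse exactly the same decomposition of the diagonal symplectic factor that worked there, since it only exploits the symmetry of $D$, not the coincidence $H=K$.

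Concretely, I would apply the identity
\begin{equation*}
\begin{bmatrix} D & 0 \\ 0 & D^{-1} \end{bmatrix}=\begin{bmatrix} I & 0 \\ -D^{-1} & I \end{bmatrix}\begin{bmatrix} I & D-I \\ 0 & I \end{bmatrix}\begin{bmatrix} I & 0 \\ I & I \end{bmatrix}\begin{bmatrix} I & D^{-1}-I \\ 0 & I \end{bmatrix}
\end{equation*}
already used in Theorem~\ref{thm:spp_thm} (with $D^{-T}=D^{-1}$ because $D$ is diagonal), substitute it into the factorization of $M$, and then merge adjacent unit triangular factors of the same type: the leftmost lower triangular block combines with $\begin{bmatrix} I & 0 \\ -D^{-1} & I \end{bmatrix}$ into $\begin{bmatrix} I & 0 \\ H-D^{-1} & I \end{bmatrix}$, and the rightmost upper triangular block combines with $\begin{bmatrix} I & D^{-1}-I \\ 0 & I \end{bmatrix}$ into $\begin{bmatrix} I & K+D^{-1}-I \\ 0 & I \end{bmatrix}$. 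This collapses the naive six-factor expression into four alternating unit triangular symplectic factors, yielding $M\in\mathcal{L}_{4}$.

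For the strict inclusion $SPM\subsetneq\mathcal{L}_{4}$, I would reuse the standard witness: $J=\begin{bmatrix} I & I \\ 0 & I \end{bmatrix}\begin{bmatrix} I & 0 \\ -I & I \end{bmatrix}\begin{bmatrix} I & I \\ 0 & I \end{bmatrix}\in\mathcal{L}_{3}\subset\mathcal{L}_{4}$, and observe $J\notin SPM$ (its eigenvalues $\pm i$ have zero real part, so $J$ violates Definition~\ref{def:M-matrices}). I do not expect any serious obstacle — the content of Theorem~\ref{thm:fac_M} does all the heavy lifting, and the remainder is merely algebraic bookkeeping analogous to the positive definite case; the only mild subtlety is noticing that the proof of Theorem~\ref{thm:spp_thm} never used the fact that $H$ and $K$ coincide, so the four-factor count survives intact in the present setting.
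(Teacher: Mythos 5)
Your proposal is correct and follows essentially the same route as the paper: the paper's proof simply asserts that $SPM\subset\mathcal{L}_{4}$ follows from Theorem~\ref{thm:fac_M} by treating $D$ as a symmetric matrix exactly as in Theorem~\ref{thm:spp_thm}, which is precisely the bookkeeping you carry out. The only (immaterial) difference is the witness for strictness — the paper uses $-I_{2d}\in\mathcal{L}_{4}\setminus SPM$ where you use $J$ — and both work.
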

\begin{proof}
It is easy to show $SPM\subset \mathcal{L}_{4}$ based on Theorem \ref{thm:fac_M}. Moreover, $-I_{2d}\in \mathcal{L}_{4}$ while $-I_{2d}\notin SPM$, hence $SPM\subsetneq \mathcal{L}_{4}$.
\end{proof}
\begin{theorem}
$SPM\nsubseteq \mathcal{L}_{3}$.
\end{theorem}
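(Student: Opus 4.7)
The plan is to mimic the argument used for Theorem \ref{thm:spp_l3}: produce one explicit symplectic M-matrix that cannot lie in $\mathcal{L}_{3}$, then invoke the same algebraic obstruction that ruled out $G=\begin{bmatrix}2I&0\\0&\tfrac12 I\end{bmatrix}$ from $\mathcal{L}_{3}$ in the positive-definite case. In fact, I expect that the very same matrix $G$ will work here, so no new witness needs to be constructed.

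First, I would verify that $G=\begin{bmatrix}2I&0\\0&\tfrac12 I\end{bmatrix}$ belongs to $SPM$. This is immediate from Theorem \ref{thm:fac_M}: take $D=2I$, $H=0$, $K=0$; then $H\le 0$, $K\le 0$, $HDK=0$ is diagonal, and the product reproduces $G$. Alternatively, one checks directly from Definition \ref{def:M-matrices} that $G$ has only non-positive off-diagonal entries (indeed all zero) and every eigenvalue of $G$ is either $2$ or $\tfrac12$, both with positive real part, while the symplectic property of $G$ is clear since $D=2I$ is nonsingular.

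Second, I would recall the explicit computation performed in the proof of Theorem \ref{thm:spp_l3}: for any symmetric $S_{1},S_{2},S_{3}\in\R^{d\times d}$,
\begin{equation*}
\begin{bmatrix} I & S_{3} \\ 0 & I \end{bmatrix}\begin{bmatrix} I & 0 \\ S_{2} & I \end{bmatrix}\begin{bmatrix} I & S_{1} \\ 0 & I \end{bmatrix}=\begin{bmatrix} I+S_{3}S_{2} & S_{1}+S_{3}S_{2}S_{1}+S_{3} \\ S_{2} & I+S_{2}S_{1} \end{bmatrix}.
\end{equation*}
If $G$ were to equal this product, then comparing lower-left blocks forces $S_{2}=0$, and substituting into the upper-left block forces $I=2I$, a contradiction. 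Since this already establishes $G\notin\mathcal{L}_{3}$ while $G\in SPM$, we conclude $SPM\nsubseteq\mathcal{L}_{3}$.

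The only point requiring any care, and the only place where this differs from the $SPP$ argument, is the membership check $G\in SPM$; but since $G$ is a positive diagonal symplectic matrix whose off-diagonal entries vanish and whose spectrum is positive, this verification is essentially trivial. Consequently there is no real obstacle: the hardest step is simply producing the example, and the example from Theorem \ref{thm:spp_l3} is already a symplectic M-matrix, making the proof a one-line reduction to that earlier result.
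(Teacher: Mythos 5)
Your proof is correct and follows exactly the route the paper intends: the paper's own proof is just the remark that it is ``similar to the proof of Theorem \ref{thm:spp_l3},'' and you have correctly observed that the same witness $G=\begin{bmatrix}2I&0\\0&\tfrac12 I\end{bmatrix}$ lies in $SPM$ (zero off-diagonal entries, positive spectrum, and symplectic as a diagonal symplectic matrix with $D=2I$, $H=K=0$ in Theorem \ref{thm:fac_M}) while the block computation forcing $S_2=0$ and $I=2I$ excludes it from $\mathcal{L}_3$. Nothing is missing.
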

\begin{proof}
It is similar to the proof of Theorem \ref{thm:spp_l3}.
\end{proof}

\section{Unconstrained parametrization and optimization} \label{sec:parametrization}
In this section, we first consider the unconstrained parametrization of the symplectic matrices or its subsets. For a set $\Omega$, we expect to find a map $\phi:\R^{n}\rightarrow \Omega$ which is smooth and surjective. Therefore one may be able to deal with some problems on unconstrained parameter space $\R^n$ instead of $\Omega$ by employing $\phi$. In the last subsection, we turn to the topic about how the unconstrained parametrization impacts on the optimization problems with symplectic constraints. In order to represent the free parameters of a symmetric matrix, we define the map $Pa$ for extracting the lower triangular parameters as $Pa(S)=(s_{11},s_{21},s_{22},s_{31},\cdots,s_{dd})$, where $S=(s_{ij})\in \R^{d\times d}$, $S^{T}=S$.

\subsection{Parametrization of general symplecic matrices}
Based on Theorem \ref{thm:fac_thm}, one can parametrize $SP$ efficiently. Take symmetric $S_{1},S_{2},\cdots,S_{9}\in \R^{d\times d}$, then
\begin{equation} \label{eq:SP_para}
H(Pa(S_{1}),\cdots,Pa(S_{9}))=\begin{bmatrix} I & S_{9} \\ 0 & I \end{bmatrix}\cdots \begin{bmatrix} I & 0 \\ S_{2} & I \end{bmatrix}\begin{bmatrix} I & S_{1} \\ 0 & I \end{bmatrix} \\
\end{equation}
can represent any symplectic matrix when $Pa(S_{1}),\cdots,Pa(S_{9})$ vary. Note that here $Pa(S_{i})$ are free parameters without any constraint, thus $H$ is a surjective map from $\R^{\frac{9d(d+1)}{2}}$ into $SP$. With (\ref{eq:SP_para}), it is easy to write out the inverse element and the transpose of $H$, i.e.,
\begin{equation} \label{eq:inv}
H(Pa(S_{1}),\cdots,Pa(S_{9}))^{-1}=\begin{bmatrix} I & -S_{1} \\ 0 & I \end{bmatrix}\cdots \begin{bmatrix} I & 0 \\ -S_{8} & I \end{bmatrix}\begin{bmatrix} I & -S_{9} \\ 0 & I \end{bmatrix},
\end{equation}
\begin{equation} \label{eq:trans}
H(Pa(S_{1}),\cdots,Pa(S_{9}))^{T}=\begin{bmatrix} I & 0 \\ S_{1} & I \end{bmatrix}\cdots \begin{bmatrix} I & S_{8} \\ 0 & I \end{bmatrix}\begin{bmatrix} I & 0 \\ S_{9} & I \end{bmatrix}.
\end{equation}
The concise expressions of the inverse element and the transpose make it possible to construct more complex structured symplectic matrices.
\subsection{Parametrization of positive definite symplectic matrices}
By Theorem \ref{thm:spp_thm}, we know that $SPP$ is a subset of $\mathcal{L}_4$. Since $\mathcal{L}_n$ cannot guarantee the symmetry for its elements, a new collection of symplectic matrices is needed. The following theorem will show how to construct the unconstrained parametrization of $SPP$. Denote
\begin{equation*}
\mathcal{L}_{n} ^2= \{ LL^T| L \in \mathcal{L}_{n}\}.
\end{equation*}
By (\ref{eq:inv}) and (\ref{eq:trans}), we know $L$ is invertible and $L^{T}$ is still symplectic, so that $LL^T$ is a positive definite  symplectic matrix. Thus $\mathcal{L}_{m}^2 \subset \mathcal{L}_{n}^2\subset SPP$ for all integers $1\leq m\leq n$.
\begin{theorem} \label{thm:sppfac_thm}
$SPP=\mathcal{L}_{4}^2$. Thus any positive definite symplectic matrix can be represented as the product of a matrix which can be factored into no more than $4$ unit triangular symplectic matrices and its transpose.
\end{theorem}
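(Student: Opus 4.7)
The forward inclusion $\mathcal{L}_4^2 \subset SPP$ is already noted before the statement, since every $L\in\mathcal{L}_4$ is invertible and symplectic, so $LL^T$ is symplectic symmetric positive definite. So the real content is showing $SPP \subset \mathcal{L}_4^2$: every $H\in SPP$ can be written as $LL^T$ with $L\in\mathcal{L}_4$.

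The plan is to recycle the LDU normal form from the proof of Theorem \ref{thm:spp_thm}. Given $H\in SPP$, its left upper block is symmetric positive definite, hence nonsingular, so applying the LDU factorization (Property \ref{pro:LDU}) together with the symmetry of $H$ puts it in the form (\ref{eq:SPP_LDU}):
\begin{equation*}
H=\begin{bmatrix} I & 0 \\ S & I \end{bmatrix}\begin{bmatrix} P & 0 \\ 0 & P^{-T} \end{bmatrix}\begin{bmatrix} I & S \\ 0 & I \end{bmatrix}
\end{equation*}
with $S$ symmetric and $P$ symmetric positive definite. The key observation is that the diagonal factor is itself symmetric, so it admits a clean symmetric square-root splitting. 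Writing $P=P_0 P_0^T$ (for instance via Cholesky, or via the symmetric positive definite square root $P_0=P^{1/2}$), one obtains
\begin{equation*}
\begin{bmatrix} P & 0 \\ 0 & P^{-T} \end{bmatrix}=\begin{bmatrix} P_0 & 0 \\ 0 & P_0^{-T} \end{bmatrix}\begin{bmatrix} P_0 & 0 \\ 0 & P_0^{-T} \end{bmatrix}^{T},
\end{equation*}
and therefore $H=LL^T$ where $L=\begin{bmatrix} I & 0 \\ S & I \end{bmatrix}\begin{bmatrix} P_0 & 0 \\ 0 & P_0^{-T} \end{bmatrix}$.

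The remaining step is to verify $L\in\mathcal{L}_4$. Here I would apply the four-factor identity already used inside the proof of Theorem \ref{thm:spp_thm} (exploiting again that $P_0$ is symmetric, so we do not need Lemma \ref{lem:decomp}'s more expensive seven-factor splitting):
\begin{equation*}
\begin{bmatrix} P_0 & 0 \\ 0 & P_0^{-T} \end{bmatrix}=\begin{bmatrix} I & 0 \\ -P_0^{-1} & I \end{bmatrix}\begin{bmatrix} I & P_0-I \\ 0 & I \end{bmatrix}\begin{bmatrix} I & 0 \\ I & I \end{bmatrix}\begin{bmatrix} I & P_0^{-1}-I \\ 0 & I \end{bmatrix}.
\end{equation*}
Multiplying on the left by $\begin{bmatrix} I & 0 \\ S & I \end{bmatrix}$ merges the two leftmost lower-triangular factors (both have the lower-left block replaced by the sum $S-P_0^{-1}$), leaving an alternating lower-upper-lower-upper product, which is exactly the pattern defining $\mathcal{L}_4$. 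Hence $L\in\mathcal{L}_4$ and $H\in\mathcal{L}_4^2$.

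The main obstacle is keeping the count down to four unit triangular factors. A naive route through Theorem \ref{thm:fac_thm} (nine factors) followed by $H=LL^T$ would be far too expensive and would not match the desired $\mathcal{L}_4^2$ structure. The crucial saving is double: first the LDU form for $H$ uses only three blocks because $H$ is symmetric with nonsingular upper-left block, and second the symmetry of $P$ lets us square-root it with a single matrix $P_0$ (rather than a product $P_1 P_2$ of two symmetric matrices), so the diagonal factor inside $L$ needs only the symmetric four-factor identity above. Absorbing the outer $\begin{bmatrix} I & 0 \\ S & I \end{bmatrix}$ into the leftmost lower-triangular factor saves one more slot, which is precisely what makes $L\in\mathcal{L}_4$ rather than $\mathcal{L}_5$.
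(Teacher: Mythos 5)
Your proposal is correct and follows essentially the same route as the paper: the symmetric LDU form (\ref{eq:SPP_LDU}), the splitting $P=P^{1/2}P^{1/2}$ so that $H=LL^{T}$ with $L=\bigl[\begin{smallmatrix} I & 0 \\ S & I \end{smallmatrix}\bigr]\bigl[\begin{smallmatrix} P^{1/2} & 0 \\ 0 & (P^{1/2})^{-1} \end{smallmatrix}\bigr]$, and the four-factor identity with the outer lower-triangular factor absorbed to land in $\mathcal{L}_{4}$. One small caution: the Cholesky option you mention in passing would not serve in the last step, since the four-factor identity requires $P_{0}$ symmetric (its blocks $-P_{0}^{-1}$, $P_{0}-I$, $P_{0}^{-1}-I$ must be symmetric to give unit triangular \emph{symplectic} factors), so only the symmetric square root --- which you do ultimately rely on --- completes the argument.
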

\begin{proof}
According to (\ref{eq:SPP_LDU}) in the proof of Theorem \ref{thm:spp_thm}, every positive definite symplectic matrix can be written as
\begin{equation*}
    H=\begin{bmatrix} I & 0 \\ S & I \end{bmatrix}\begin{bmatrix} P & 0 \\ 0 & P^{-T} \end{bmatrix}\begin{bmatrix} I & S \\ 0 & I \end{bmatrix}
\end{equation*}
with $P$ symmetric positive definite. Let $P^{1/2}$ be the unique positive definite square root of $P$. $P=P^{1/2}P^{1/2}$ implies that $P^{-1}=(P^{1/2})^{-1}(P^{1/2})^{-1}$, then
\begin{equation*}
\begin{split}
    H&=\begin{bmatrix} I & 0 \\ S & I \end{bmatrix}\begin{bmatrix} P^{1/2} & 0 \\ 0 & (P^{1/2})^{-1}\end{bmatrix}\begin{bmatrix} P^{1/2} & 0 \\ 0 & (P^{1/2})^{-1} \end{bmatrix}\begin{bmatrix} I & S \\ 0 & I \end{bmatrix}\\
    &= \left(\begin{bmatrix} I & 0 \\ S & I \end{bmatrix}\begin{bmatrix} P^{1/2} & 0 \\ 0 & (P^{1/2})^{-1} \end{bmatrix}\right) \left(\begin{bmatrix} I & 0 \\ S & I \end{bmatrix}\begin{bmatrix} P^{1/2} & 0 \\ 0 & (P^{1/2})^{-1} \end{bmatrix} \right)^T.
\end{split}
\end{equation*}
Just like what we did in Theorem \ref{thm:spp_thm}, taking into account Lemma \ref{lem:decomp} and the symmetry of $P^{1/2}$, we have
\begin{equation*}
\begin{split}
&\begin{bmatrix} I & 0 \\ S & I \end{bmatrix}\begin{bmatrix} P^{1/2} & 0 \\ 0 & (P^{1/2})^{-1} \end{bmatrix} \\
=&\begin{bmatrix} I & 0 \\ S & I \end{bmatrix}\begin{bmatrix} I & 0 \\ -(P^{1/2})^{-1} & I \end{bmatrix}\begin{bmatrix} I & P^{1/2}-I \\ 0 & I \end{bmatrix}\begin{bmatrix} I & 0 \\ I & I \end{bmatrix}\begin{bmatrix} I & (P^{1/2})^{-1}-I \\ 0 & I \end{bmatrix} \\
=&\begin{bmatrix} I & 0 \\ S-(P^{1/2})^{-1} & I \end{bmatrix}\begin{bmatrix} I & P^{1/2}-I \\ 0 & I \end{bmatrix}\begin{bmatrix} I & 0 \\ I & I \end{bmatrix}\begin{bmatrix} I & (P^{1/2})^{-1}-I \\ 0 & I \end{bmatrix}, \\
\end{split}
\end{equation*}
hence we have obtained the expression of $L$ exactly.
\end{proof}
Based on Theorem \ref{thm:sppfac_thm}, one can represent $SPP$ by parameterizing $\mathcal{L}_4$. Take symmetric $S_1,S_2,S_3,S_4 \in \R^{d \times d}$, then denote
\begin{equation*}
\begin{split}
&H(Pa(S_{1}),\cdots,Pa(S_{4}))=LL^{T},\\
&L=\begin{bmatrix} I & 0 \\ S_{4} & I \end{bmatrix}\begin{bmatrix} I & S_{3} \\ 0 & I \end{bmatrix}\begin{bmatrix} I & 0 \\ S_{2} & I \end{bmatrix}\begin{bmatrix} I & S_{1} \\ 0 & I \end{bmatrix}. \\
\end{split}
\end{equation*}
Here $Pa(S_{i})$ are free parameters without any constraint, thus $H$ is a surjective map from $\R^{2d(d+1)}$ into $SPP$.

\subsection{Parametrization of singular symplectic matrices}
There have been some studies on the structure of singular symplectic matrices \cite{long1990maslov,long1991structure}.
\begin{definition} \label{def:singular_SP}
A symplectic matrix $H\in SP$ is singular if $det(H-I)=0$.
\end{definition}
Denote all the singular symplectic matrices by
\begin{equation*}
SPS=\{H\in SP|H\ singular \}.
\end{equation*}
A symplectic matrix $H$ is singular when $1$ is one of its eigenvalues, and there is a $0\neq u \in \R^{2d}$ such that $Hu=u$ where $u$ is called the fixed vector of $H$. Theorem \ref{thm:fac_thm} prompts us to consider the following question: If there are several linearly independent vectors $u_{1},u_{2},...,u_{n}\in\R^{2d},n\leq 2d$, how to search for a symplectic matrix $H$ such that $H u_i=u_i$?

Let us consider a slightly simpler case. Denote
\begin{equation*}
U=[u_{1},u_{2},\cdots,u_{n}]=\begin{bmatrix} V_{1} \\ V_{2} \end{bmatrix}\in\R^{2d\times n},\quad V_{1},V_{2}\in\R^{d\times n},
\end{equation*}
where $U$ is a full-rank upper triangular matrix, and assume that $rank(V_{1})=r_{1}$, $rank(V_{2})=r_{2}$. Now let
\begin{equation*}
\begin{split}
&\mathcal{V}_{1}=\left\{\begin{bmatrix} I & 0 \\ S & I\end{bmatrix}\Bigg|S=\begin{bmatrix} O_{r_{1}} & 0 \\ 0 & T \end{bmatrix},\quad T\in\R^{(d-r_{1})\times(d-r_{1})}\ symmetric\right\}, \\
&\mathcal{V}_{2}=\left\{\begin{bmatrix} I & S \\ 0 & I\end{bmatrix}\Bigg|S=\begin{bmatrix} O_{r_{2}} & 0 \\ 0 & T \end{bmatrix},\quad T\in\R^{(d-r_{2})\times(d-r_{2})}\ symmetric\right\}, \\
&\mathcal{V}_{U}=\{H_{k}\cdots H_{2}H_{1}|H_{i}\in\mathcal{V}_{1}\cup\mathcal{V}_{2},k\geq1\}.
\end{split}
\end{equation*}
Note that
\begin{equation*}
\left\{
\begin{aligned}
&r_{1}=n,r_{2}=0\quad &when\ 1\leq n<d \\
&r_{1}=d,r_{2}=n-d\quad &when\ d\leq n\leq 2d \\
\end{aligned}
\right..
\end{equation*}
Given any $H\in \mathcal{V}_{U}$, it is easy to verify that $HU=U$. For instance, if $H\in \mathcal{V}_{1}$, then
\begin{equation*}
HU=\begin{bmatrix} I & 0 \\ \begin{bmatrix} O_{r_{1}} & 0 \\ 0 & T \end{bmatrix} & I\end{bmatrix}\begin{bmatrix} V_{1} \\ V_{2} \end{bmatrix}=\begin{bmatrix} V_{1} \\ \begin{bmatrix} O_{r_{1}} & 0 \\ 0 & T \end{bmatrix}V_{1}+V_{2} \end{bmatrix}=\begin{bmatrix} V_{1} \\ V_{2} \end{bmatrix}=U. \\
\end{equation*}
Note that the last $d-r_{1}$ rows of $V_{1}$ are all $0$.

Denote the set of all the symplectic matrices $H$ satisfying $HU=U$ by $\mathcal{W}_{U}$.
It is easy to see that $\mathcal{W}_{U}$ and $\mathcal{V}_{U}$ are both groups, moreover, $\mathcal{V}_{U}$ is a subgroup of $\mathcal{W}_{U}$. Whether $\mathcal{V}_{U}=\mathcal{W}_{U}$ or $\mathcal{V}_{U}\subsetneq\mathcal{W}_{U}$ is an interesting problem to be concerned with. At least, in the case of $n=0$, Theorem \ref{thm:fac_thm} immediately points out that $\mathcal{V}_{U}=\mathcal{W}_{U}$ since both of $\mathcal{V}_{U}$ and $\mathcal{W}_{U}$ equal to $SP$, while in the case of $n=2d$, $\mathcal{V}_{U}$ and $\mathcal{W}_{U}$ degenerate both into $\{I_{2d}\}$. A theorem is proposed as follows.
\begin{theorem} \label{thm:SPS_thm}
$\mathcal{V}_{U}=\mathcal{W}_{U}$. Here $U\in\R^{2d\times n}$ ($1\leq n \leq 2d$) is any full-rank upper triangular matrix. Furthermore,
\begin{equation*}
\left\{
\begin{aligned}
&\mathcal{V}_{U}=\{H_{10}\cdots H_{2}H_{1}|H_{2i-1}\in\mathcal{V}_{2},H_{2i}\in\mathcal{V}_{1}\}\quad &when\ 1\leq n<d \\
&\mathcal{V}_{U}=\{H|H\in\mathcal{V}_{2}\}\quad &when\ d\leq n\leq 2d \\
\end{aligned}
\right..
\end{equation*}
\end{theorem}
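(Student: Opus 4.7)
I would prove both inclusions separately and split the reverse inclusion by the size of $n$.

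The inclusion $\mathcal{V}_U \subset \mathcal{W}_U$ is direct: the excerpt already checks $HU = U$ for $H \in \mathcal{V}_1$, and an analogous calculation works for $H \in \mathcal{V}_2$ using that the bottom $d - r_2$ rows of $V_2$ vanish (a consequence of the upper triangular form of $U$ together with the rank formulas for $r_1, r_2$). Closure of $\mathcal{W}_U$ under products then gives the inclusion. For the reverse direction, I would first normalize $U$ to consist of the first $n$ standard basis vectors of $\R^{2d}$ by the column operation $U \mapsto UR$ with a suitable invertible upper triangular $R$; this leaves $\mathcal{W}_U$ unchanged and only relabels the parameters of $\mathcal{V}_1, \mathcal{V}_2$.

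For $d \leq n \leq 2d$, writing $H = \begin{bmatrix} A_1 & B_1 \\ A_2 & B_2 \end{bmatrix}$, the first $d$ columns of $U$ (whose $V_2$ part vanishes) force $A_1 = I_d$ and $A_2 = 0$, and Property \ref{pro:smat_pro} then forces $B_2 = I$ and $B_1$ symmetric. The remaining $n - d$ columns of $HU = U$ force $B_1$ to annihilate the corresponding columns of $V_2$, and combined with the symmetry of $B_1$ this yields exactly the structure of an element of $\mathcal{V}_2$: the upper-left $(n-d) \times (n-d)$ block of $B_1$ and its two off-diagonal blocks all vanish, while the bottom-right $(2d-n) \times (2d-n)$ block is free. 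Hence $\mathcal{W}_U = \mathcal{V}_2$ in this regime.

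For $1 \leq n < d$, the column span of $U$ is an $n$-dimensional isotropic subspace (since $V_2 = 0$ gives $U^T J U = 0$), so the symplectic form descends to the quotient $U^\perp / U$ as a nondegenerate form of rank $2(d-n)$, and every $H \in \mathcal{W}_U$ preserves $U^\perp$ and induces a symplectic map $\tilde H$ of dimension $2(d-n)$. The plan is to apply Theorem \ref{thm:fac_thm} in dimension $2(d-n)$ to factor $\tilde H$ into nine alternating unit triangular factors, and then lift each factor back to $\mathcal{W}_U$: a lower triangular factor $\begin{bmatrix} I_{d-n} & 0 \\ T & I_{d-n} \end{bmatrix}$ has a unique lift in $\mathcal{V}_1$ given by $S = \begin{bmatrix} 0 & 0 \\ 0 & T \end{bmatrix}$, while an upper triangular factor $\begin{bmatrix} I_{d-n} & T \\ 0 & I_{d-n} \end{bmatrix}$ admits an entire family of lifts in $\mathcal{V}_2$ (any symmetric $d \times d$ matrix with bottom-right block $T$). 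After multiplying out these nine lifts, the residual factor lies in the kernel of the induced map from $\mathcal{W}_U$ to the symplectic group in dimension $2(d-n)$, which has an explicit low-dimensional parametrization; I would absorb this residual using the free parameters in the five $\mathcal{V}_2$ lifts together with one additional $\mathcal{V}_1$ factor adjoined on the left, producing ten factors in the alternating pattern $H_{2i-1} \in \mathcal{V}_2$, $H_{2i} \in \mathcal{V}_1$.

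The hard part will be the final step in the second regime: verifying that the free parameters in the five $\mathcal{V}_2$ lifts together with one extra $\mathcal{V}_1$ factor really suffice to cancel any kernel element while preserving the alternation. A direct description of the kernel (whose elements satisfy the relations $H_{43} = -H_{12}^T$, $H_{14} = H_{23}^T$ and a mild symmetry condition on $H_{13}$, with the block index running over the partition $\R^{2d} = \R^n \oplus \R^{d-n} \oplus \R^n \oplus \R^{d-n}$) together with a careful choice of lifts should close this gap.
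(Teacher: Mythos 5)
Your treatment of the inclusion $\mathcal{V}_U\subset\mathcal{W}_U$, the normalization $U\mapsto UR$, and the entire case $d\leq n\leq 2d$ is correct and essentially identical to the paper's argument (fix $A_1=I$, $A_2=0$, invoke Property \ref{pro:smat_pro} to get $H=\begin{bmatrix} I & B_1\\ 0 & I\end{bmatrix}$ with $B_1$ symmetric, then kill the appropriate blocks of $B_1$). The problem is the case $1\leq n<d$, which is where all the content of the theorem lives. Your reduction to the quotient $U^\perp/U$ is sound as far as it goes: $\pi:\mathcal{W}_U\to Sp(2(d-n))$ is a well-defined homomorphism, Theorem \ref{thm:fac_thm} factors $\pi(H)$ into nine alternating factors, and these lift into $\mathcal{V}_1\cup\mathcal{V}_2$. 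But the absorption of the residual $K=H V^{-1}\in N\coloneqq\ker\pi$ is exactly the hard part, and your plan for it does not work as stated. The extra $\mathcal{V}_1$ factor cannot help: $N\cap\mathcal{V}_1=\{I\}$, so any nontrivial $L_{10}\in\mathcal{V}_1$ adjoined on the left changes $\pi$ of the product and destroys the identity $\pi(\text{product})=\pi(H)$ unless you simultaneously re-factor $\pi(H)$, which you have not provided for. Hence the entire kernel element $K$ must be produced by the free parameters of the five $\mathcal{V}_2$ lifts alone, i.e., by inserting elements $\nu_i\in N\cap\mathcal{V}_2$ and using that $K$ must equal a product of conjugates of the $\nu_i$ by the partial products $L_9\cdots L_{2i}$. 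Elements of $N\cap\mathcal{V}_2$ have vanishing $(1,2)$-block $H_{12}$ (in your four-block partition), while a general element of $N$ does not, so the $H_{12}$-component can only be reached through these conjugations, whose ranges depend on the (possibly degenerate) blocks of the chosen factorization of $\pi(H)$. Showing that the conjugates suffice for an arbitrary $H$ — or that a factorization of $\pi(H)$ can always be chosen to make them suffice — is an unproved claim of essentially the same difficulty as the theorem itself, so this is a genuine gap, not a routine verification.

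For comparison, the paper's proof of this case is constructive rather than quotient-based: from $HU=U$ and Property \ref{pro:smat_pro} it extracts the full-rank symmetric pair $\begin{bmatrix} A_{22}\\ B_{22}\end{bmatrix}\in\R^{2(d-n)\times(d-n)}$, completes it to a $2(d-n)$-dimensional symplectic matrix $\widetilde{G}$ via Property \ref{pro:part_SP}, applies the Theorem \ref{thm:nonsin_thm} mechanism to normalize a block of $\widetilde{G}$ to the identity, and then writes down an explicit chain of left and right multiplications by elements of $\mathcal{V}_1$, $\mathcal{V}_2$ and block-diagonal symplectic matrices of the form $\mathrm{diag}(I_n\oplus P,\,I_n\oplus P^{-T})$ that reduces $H$ to $\begin{bmatrix} I & T\\ 0 & I\end{bmatrix}$; Remark \ref{rem:moving} and Lemma \ref{lem:decomp} (applied in dimension $2(d-n)$ and embedded back) then yield the ten alternating factors. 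If you want to salvage your route, you would need an explicit parametrization of $N$ together with a proof that your chosen lifts generate it — at which point you will likely find yourself doing the paper's block computation anyway.
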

\begin{proof}
\textbf{1.} Case $d\leq n\leq 2d$. Denote
\begin{equation*}
U=\begin{bmatrix}U_{11} & U_{12} \\ 0 & U_{22}\end{bmatrix}=\begin{bmatrix}U_{11} & U_{12} \\ 0 & \begin{bmatrix}\widetilde{U}_{22} \\ 0\end{bmatrix}\end{bmatrix}
\end{equation*}
where $U_{11}\in\R^{d\times d},U_{12}\in\R^{d\times (n-d)},U_{22}\in\R^{d\times (n-d)},\widetilde{U}_{22}\in\R^{(n-d)\times(n-d)}$. Since $U$ is full-rank upper triangular, $U_{11}$ and $\widetilde{U}_{22}$ are nonsingular upper triangular matrices. For any $H=\begin{bmatrix}A_{1} & B_{1} \\ A_{2} & B_{2} \end{bmatrix}\in \mathcal{W}_{U}$,
\begin{equation*}
\begin{split}
&HU=U\Rightarrow\begin{bmatrix}A_{1} & B_{1} \\ A_{2} & B_{2} \end{bmatrix}\begin{bmatrix}U_{11} & U_{12} \\ 0 & U_{22}\end{bmatrix}=\begin{bmatrix}U_{11} & U_{12} \\ 0 & U_{22}\end{bmatrix} \\
&\Rightarrow\begin{bmatrix}A_{1}U_{11} & A_{1}U_{12}+B_{1}U_{22} \\ A_{2}U_{11} & A_{2}U_{12}+B_{2}U_{22} \end{bmatrix}=\begin{bmatrix}U_{11} & U_{12} \\ 0 & U_{22}\end{bmatrix}\Rightarrow A_{1}=I,\ A_{2}=0.
\end{split}
\end{equation*}
With $A_{1}=I,A_{2}=0$ and the third item of Property \ref{pro:smat_pro}, we find that $H=\begin{bmatrix}I & B_{1} \\ 0 & I \end{bmatrix}$. Moreover,
\begin{equation*}
\begin{split}
&A_{1}U_{12}+B_{1}U_{22}=U_{12}\Rightarrow B_{1}U_{22}=0\Rightarrow\begin{bmatrix}B_{11} & B_{12} \\ B_{21} & B_{22}\end{bmatrix}\begin{bmatrix}\widetilde{U}_{22} \\ 0\end{bmatrix}=0 \\
&\Rightarrow\begin{bmatrix}B_{11}\widetilde{U}_{22} \\ B_{21}\widetilde{U}_{22}\end{bmatrix}=0\Rightarrow B_{11}=0,\ B_{21}=0,
\end{split}
\end{equation*}
and the symmetry of $B_{1}$ shows that $B_{12}=B_{21}=0$. Hence
\begin{equation*}
H=\begin{bmatrix}I & \begin{bmatrix}O_{n-d} & 0 \\ 0 & B_{22} \end{bmatrix} \\ 0 & I \end{bmatrix}\in\mathcal{V}_{2}\subset\mathcal{V}_{U},
\end{equation*}
which means $\mathcal{W}_{U}\subset \mathcal{V}_{U}$, consequently $\mathcal{V}_{U}=\mathcal{W}_{U}$.

\textbf{2.} Case $1\leq n<d$. Denote
\begin{equation*}
U=\begin{bmatrix}U_{1}\\0\end{bmatrix}=\begin{bmatrix}\begin{bmatrix}\widetilde{U}_{1}\\0\end{bmatrix}\\0\end{bmatrix},\quad U_{1}\in\R^{d\times n},\widetilde{U}_{1}\in\R^{n\times n},
\end{equation*}
where $\widetilde{U}_{1}$ is nonsingular upper triangular. For any $H=\begin{bmatrix}A & \star \\ B & \star \end{bmatrix}\in \mathcal{W}_{U}$,
\begin{equation*}
\begin{split}
&HU=U\Rightarrow\begin{bmatrix}A & \star \\ B & \star \end{bmatrix}\begin{bmatrix}U_{1}\\0\end{bmatrix}=\begin{bmatrix}U_{1}\\0\end{bmatrix}\Rightarrow
\begin{bmatrix}\begin{bmatrix}A_{11}&A_{12}\\A_{21}&A_{22}\end{bmatrix} & \star \\ \begin{bmatrix}B_{11}&B_{12}\\B_{21}&B_{22}\end{bmatrix} & \star \end{bmatrix}\begin{bmatrix}\begin{bmatrix}\widetilde{U}_{1}\\0\end{bmatrix}\\0\end{bmatrix}=
\begin{bmatrix}\begin{bmatrix}\widetilde{U}_{1}\\0\end{bmatrix}\\0\end{bmatrix} \\
&\Rightarrow\begin{bmatrix}\begin{bmatrix}A_{11}&A_{12}\\A_{21}&A_{22}\end{bmatrix}\cdot\begin{bmatrix}\widetilde{U}_{1}\\0\end{bmatrix} \\ \begin{bmatrix}B_{11}&B_{12}\\B_{21}&B_{22}\end{bmatrix}\cdot\begin{bmatrix}\widetilde{U}_{1}\\0\end{bmatrix}  \end{bmatrix}=\begin{bmatrix}\begin{bmatrix}\widetilde{U}_{1}\\0\end{bmatrix}\\0\end{bmatrix}\Rightarrow
\begin{bmatrix}\begin{bmatrix}A_{11}\widetilde{U}_{1}\\A_{21}\widetilde{U}_{1}\end{bmatrix} \\ \begin{bmatrix}B_{11}\widetilde{U}_{1}\\B_{21}\widetilde{U}_{1}\end{bmatrix}  \end{bmatrix}=\begin{bmatrix}\begin{bmatrix}\widetilde{U}_{1}\\0\end{bmatrix}\\0\end{bmatrix} \\
&\Rightarrow A_{11}=I_{n},\ A_{21}=0,\ B_{11}=O_{n},\ B_{21}=0.
\end{split}
\end{equation*}
Property \ref{pro:smat_pro} leads to
\begin{equation*}
\begin{split}
&\begin{bmatrix}A_{11}&A_{12}\\A_{21}&A_{22}\end{bmatrix}^{T}\begin{bmatrix}B_{11}&B_{12}\\B_{21}&B_{22}\end{bmatrix}=
\begin{bmatrix}B_{11}&B_{12}\\B_{21}&B_{22}\end{bmatrix}^{T}\begin{bmatrix}A_{11}&A_{12}\\A_{21}&A_{22}\end{bmatrix} \\
&\Rightarrow \begin{bmatrix}I_{n}&A_{12}\\0&A_{22}\end{bmatrix}^{T}\begin{bmatrix}O_{n}&B_{12}\\0&B_{22}\end{bmatrix}=
\begin{bmatrix}O_{n}&B_{12}\\0&B_{22}\end{bmatrix}^{T}\begin{bmatrix}I_{n}&A_{12}\\0&A_{22}\end{bmatrix} \\
&\Rightarrow \begin{bmatrix}O_{n}&B_{12}\\0&A_{12}^{T}B_{12}+A_{22}^{T}B_{22}\end{bmatrix}=
\begin{bmatrix}O_{n}&0\\B_{12}^{T}&B_{12}^{T}A_{12}+B_{22}^{T}A_{22}\end{bmatrix} \\
&\Rightarrow B_{12}=0,\ A_{22}^{T}B_{22}=B_{22}^{T}A_{22}.
\end{split}
\end{equation*}
Now we have
\begin{equation*}
H=\begin{bmatrix}\begin{bmatrix}A_{11}&A_{12}\\A_{21}&A_{22}\end{bmatrix} & \star \\ \begin{bmatrix}B_{11}&B_{12}\\B_{21}&B_{22}\end{bmatrix} & \star \end{bmatrix}=\begin{bmatrix}\begin{bmatrix}I_{n}&A_{12}\\0&A_{22}\end{bmatrix} & \star \\ \begin{bmatrix}O_{n}&0\\0&B_{22}\end{bmatrix} & \star \end{bmatrix},\quad A_{22}^{T}B_{22}=B_{22}^{T}A_{22}.
\end{equation*}
Since $rank(\begin{bmatrix}A\\B\end{bmatrix})=d$, we know $rank(\begin{bmatrix}A_{22}\\B_{22}\end{bmatrix})=d-rank(\begin{bmatrix}I_{n} & A_{12}\end{bmatrix})=d-n$, thus $G=\begin{bmatrix}A_{22}\\B_{22}\end{bmatrix}$ is full-rank. Property \ref{pro:part_SP} tells us that $G$ is a part of a $2(d-n)$-by-$2(d-n)$ symplectic matrix $\widetilde{G}=\begin{bmatrix}A_{22}&\star\\B_{22}&\star\end{bmatrix}$. Similar to the proof of Theorem \ref{thm:nonsin_thm}, there exist a symmetric $S$ and nonsingular $P,Q$ such that
\begin{equation*}
\begin{bmatrix}P&0\\0&P^{-T}\end{bmatrix}\begin{bmatrix}I_{d-n}&0\\S&I_{d-n}\end{bmatrix}\widetilde{G}\begin{bmatrix}Q&0\\0&Q^{-T}\end{bmatrix}=
\begin{bmatrix} G_{11}&\star\\I_{d-n}&\star\end{bmatrix}
\end{equation*}
where $G_{11}$ is symmetric. We have
\begin{equation*}
\begin{split}
&\begin{bmatrix}I&0\\\begin{bmatrix}O_{n}&0\\0&-I_{d-n}\end{bmatrix}&I\end{bmatrix}
\begin{bmatrix}I&\begin{bmatrix}O_{n}&-A_{12}Q\\-Q^{T}A_{12}^{T}&I_{d-n}-G_{11}\end{bmatrix}\\0&I\end{bmatrix}
\begin{bmatrix}\begin{bmatrix}I_{n}&0\\0&P\end{bmatrix}&0\\0&\begin{bmatrix}I_{n}&0\\0&P^{-T}\end{bmatrix}\end{bmatrix} \\
&\cdot\begin{bmatrix}I&0\\\begin{bmatrix}O_{n}&0\\0&S\end{bmatrix}&I\end{bmatrix}\begin{bmatrix}\begin{bmatrix}I_{n}&A_{12}\\0&A_{22}\end{bmatrix} & \star \\ \begin{bmatrix}O_{n}&0\\0&B_{22}\end{bmatrix} & \star \end{bmatrix}\begin{bmatrix}\begin{bmatrix}I_{n}&0\\0&Q\end{bmatrix}&0\\0&\begin{bmatrix}I_{n}&0\\0&Q^{-T}\end{bmatrix}\end{bmatrix} \\
=&\begin{bmatrix}I&0\\\begin{bmatrix}O_{n}&0\\0&-I_{d-n}\end{bmatrix}&I\end{bmatrix}
\begin{bmatrix}I&\begin{bmatrix}O_{n}&-A_{12}Q\\-Q^{T}A_{12}^{T}&I_{d-n}-G_{11}\end{bmatrix}\\0&I\end{bmatrix}
\begin{bmatrix}\begin{bmatrix}I_{n}&A_{12}Q\\0&G_{11}\end{bmatrix} & \star \\ \begin{bmatrix}O_{n}&0\\0&I_{d-n}\end{bmatrix} & \star \end{bmatrix} \\
=&\begin{bmatrix}I&0\\\begin{bmatrix}O_{n}&0\\0&-I_{d-n}\end{bmatrix}&I\end{bmatrix}
\begin{bmatrix}\begin{bmatrix}I_{n}&0\\0&I_{d-n}\end{bmatrix} & \star \\ \begin{bmatrix}O_{n}&0\\0&I_{d-n}\end{bmatrix} & \star \end{bmatrix}
=\begin{bmatrix}I & \star\\ 0 & \star\end{bmatrix}=\begin{bmatrix}I & T\\ 0 & I\end{bmatrix}
\end{split}
\end{equation*}
where $T$ is symmetric, hence
\begin{equation*}
\begin{split}
H=&\begin{bmatrix}I&0\\\begin{bmatrix}O_{n}&0\\0&-S\end{bmatrix}&I\end{bmatrix}
\begin{bmatrix}\begin{bmatrix}I_{n}&0\\0&P^{-1}\end{bmatrix}&0\\0&\begin{bmatrix}I_{n}&0\\0&P^{T}\end{bmatrix}\end{bmatrix}
\begin{bmatrix}I&\begin{bmatrix}O_{n}&A_{12}Q\\Q^{T}A_{12}^{T}&G_{11}-I_{d-n}\end{bmatrix}\\0&I\end{bmatrix} \\
&\cdot\begin{bmatrix}I&0\\\begin{bmatrix}O_{n}&0\\0&I_{d-n}\end{bmatrix}&I\end{bmatrix}\begin{bmatrix}I & T\\ 0 & I\end{bmatrix}
\begin{bmatrix}\begin{bmatrix}I_{n}&0\\0&Q^{-1}\end{bmatrix}&0\\0&\begin{bmatrix}I_{n}&0\\0&Q^{T}\end{bmatrix}\end{bmatrix}.
\end{split}
\end{equation*}
Taking into account Remark \ref{rem:moving} and Lemma \ref{lem:decomp}, we can easily factor $H$ into $10$ unit triangular symplectic matrices from $\mathcal{V}_{1}$ and $\mathcal{V}_{2}$. Therefore $\mathcal{W}_{U}\subset\mathcal{V}_{U}$, consequently $\mathcal{W}_{U}=\mathcal{V}_{U}$.
\end{proof}

Now return to the topic about how to parameterize singular symplectic matrices. Theorem \ref{thm:SPS_thm} gives a specific structure of singular symplectic matrices with many linearly independent vectors as their eigenvectors, especially when $U=e_{1}=[1,0,\cdots,0]^{T}$, we obtain $\mathcal{W}_{e_{1}}=\mathcal{V}_{e_{1}}$ whose structure is clear to be freely parameterized.
\begin{corollary}
The set of $2d$-by-$2d$ singular symplectic matrices is
\begin{equation*}
\begin{split}
SPS=&\Bigg\{Q\begin{bmatrix}I&0\\\begin{bmatrix}0&0\\0&S_{10}\end{bmatrix}&I\end{bmatrix}\begin{bmatrix}I&S_{9}\\0&I\end{bmatrix}\cdots
\begin{bmatrix}I&0\\\begin{bmatrix}0&0\\0&S_{2}\end{bmatrix}&I\end{bmatrix}\begin{bmatrix}I&S_{1}\\0&I\end{bmatrix}Q^{-1} \\
&\Bigg|S_{2i-1}\in\R^{d\times d}\ symmetric,\ S_{2i}\in\R^{(d-1)\times(d-1)}\ symmetric,\ Q\ symplectic\Bigg\}.
\end{split}
\end{equation*}
One can express $Q$ as the product of $9$ unit triangular symplectic matrices if needed.
\end{corollary}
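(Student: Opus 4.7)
The plan is to reduce any singular symplectic matrix to an element of $\mathcal{V}_{e_1}$ by a symplectic change of basis that sends $e_1$ to a chosen eigenvector for the eigenvalue $1$, and then invoke Theorem \ref{thm:SPS_thm} with $n=1$ to obtain the explicit factorization of the conjugated matrix.

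First, given $H\in SPS$, by definition $\det(H-I)=0$, so there is a nonzero $v\in\R^{2d}$ with $Hv=v$. Writing $v=[v_1;v_2]$ with $v_1,v_2\in\R^d$, the $2d\times 1$ matrix $v$ is full-rank and trivially satisfies $v_1^{T}v_2=v_2^{T}v_1$ (both sides are scalars), so it is a full-rank symmetric pair in the sense of Property \ref{pro:part_symm}. Applying Property \ref{pro:part_symm} (with $k=1$) extends $v$ to a full-rank symmetric pair of $d$ columns, and Property \ref{pro:part_SP} then completes this to a symplectic matrix $Q\in SP$ whose first column is $v$, i.e., $Qe_1=v$. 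Consequently $Q^{-1}HQ\cdot e_1=Q^{-1}Hv=Q^{-1}v=e_1$, so $M\coloneqq Q^{-1}HQ\in\mathcal{W}_{e_1}$.

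Next I would specialize Theorem \ref{thm:SPS_thm} to $U=e_1$, which falls into the case $1\le n<d$ with $n=1$. Here $V_1=[1,0,\ldots,0]^{T}\in\R^d$ and $V_2=0$, hence $r_1=1$ and $r_2=0$. This makes $\mathcal{V}_2$ consist of all unit upper triangular symplectic matrices (no constraint on the symmetric block), while $\mathcal{V}_1$ consists of the lower triangular symplectic matrices whose symmetric block vanishes on the first row and column; these are precisely the two families of factors appearing in the corollary's statement. Theorem \ref{thm:SPS_thm} then yields $M=H_{10}\cdots H_2H_1$ with the odd-indexed $H_{2i-1}\in\mathcal{V}_2$ and the even-indexed $H_{2i}\in\mathcal{V}_1$, which, after renaming the symmetric data, is exactly the product shown in the corollary. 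Conjugating back gives $H=QMQ^{-1}$ in the claimed form.

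For the reverse containment, any matrix of the displayed shape is a product of symplectic matrices (each factor either lies in $\mathcal{V}_1\cup\mathcal{V}_2$ or is $Q^{\pm 1}$) and is therefore symplectic. Singularity follows because every factor in $\mathcal{V}_1\cup\mathcal{V}_2$ fixes $e_1$, so the bracketed product $M$ satisfies $Me_1=e_1$; thus $Qe_1$ is a nonzero fixed vector of $QMQ^{-1}$, which exhibits $1$ as an eigenvalue and hence $\det(QMQ^{-1}-I)=0$. The final ``if needed'' clause is immediate from Theorem \ref{thm:fac_thm} applied to $Q\in SP$. The main obstacle is really just the bookkeeping in step two: one must confirm that the values $r_1=1$, $r_2=0$ produced by taking $U=e_1$ in Theorem \ref{thm:SPS_thm} translate into the precise block pattern $\begin{bmatrix}0&0\\0&S_{2i}\end{bmatrix}$ written in the corollary, and that the ordering of upper/lower factors inherited from the theorem matches the order displayed; once the conventions are aligned, the rest of the argument is essentially the conjugation trick above.
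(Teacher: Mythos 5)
Your proposal is correct and follows essentially the same route as the paper: find a fixed vector $u$ of $H$, extend it via Property \ref{pro:part_symm} and Property \ref{pro:part_SP} to a symplectic $Q$ with $Qe_1=u$, conjugate to land in $\mathcal{W}_{e_1}=\mathcal{V}_{e_1}$, and apply Theorem \ref{thm:SPS_thm} with $n=1$ (your bookkeeping $r_1=1$, $r_2=0$ matching the block pattern is exactly right). The reverse inclusion via the nonzero fixed vector $Qe_1$ is also the paper's argument.
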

\begin{proof}
For any $H=Q\begin{bmatrix}I&0\\\begin{bmatrix}0&0\\0&S_{10}\end{bmatrix}&I\end{bmatrix}\cdots\begin{bmatrix}I&S_{1}\\0&I\end{bmatrix}Q^{-1}\in SPS$, let $u=Qe_{1}\neq 0$. Then it is easy to check that $Hu=u$, hence $H$ is singular.

For any singular $H\in SP$, assume that $Hu=u$ where $0\neq u\in\R^{2d}$. From Property \ref{pro:part_symm} and Property \ref{pro:part_SP}, we know that $u$ is a part of a symplectic matrix $Q=[u\ \star]$. Therefore $Q^{-1}HQe_{1}=Q^{-1}Hu=Q^{-1}u=e_{1}$, which means that $Q^{-1}HQ\in\mathcal{W}_{e_{1}}=\mathcal{V}_{e_{1}}$ due to Theorem \ref{thm:SPS_thm}. We immediately know that $H\in SPS$.
\end{proof}

\subsection{Unconstrained optimization}
An optimization problem with symplectic constraint is in the following form
\begin{equation}
\label{eq:ConOP}
\min\limits_{X \in \R^{2d \times 2d}}f(X),\quad s.t.\ X^TJX=J.
\end{equation}
Because of the antisymmetry of $X^TJX$, the constraint of the problem (\ref{eq:ConOP}) is made up of $2d^2-d$ equations. There have been many works on optimization on the real symplectic group \cite{birtea2020optimization,fiori2011solving,wang2018riemannian}, in which one performs optimization by considering the gradients along the manifold. In this work, the unit triangular factorization provides an approach to the symplectic optimization from a new perspective, i.e., optimizing 
in a higher dimensional unconstrained parameter space. By (\ref{eq:SP_para}), problem (\ref{eq:ConOP}) is equivalent to
\begin{equation*}
\min\limits_{Pa(S_{i})\in\R^{\frac{d(d+1)}{2}}}f(H(Pa(S_{1}),\cdots,Pa(S_{9}))),
\end{equation*}
which is indeed an unconstrained optimization problem. In recent years, with the expansion of variables required by practical application, the limitations of existing constrained optimization methods are reflected in practice. Fortunately, the unconstrained parametrization of the symplectic matrices allows us to circumvent this limitation and apply faster and more efficient unconstrained optimization algorithms, such as the recently popular deep learning techniques, which are able to deal with the optimization problems with billions of unconstrained parameters. In fact, this method has been utilized in our recent work \cite{jin2020sympnets}, where we construct symplectic neural networks based on the factorization, and achieve a great success. In such a case, the unit triangular factorization-based optimization can be implemented directly within the deep learning framework and performs well, while the traditional Riemannian-steepest-descent approach faces challenges.
\section{Conclusions}
The factorization theorems shown in this work overcomes some defects of conventional factorizations. \cite{bunse1986matrix,dopico2009parametrization,higham2006symmetric,sadkane2009note,xu2003svd} provide several factorizations of the matrix symplectic group. However, all the factorizations require cells of symplectic interchanges, permutation matrices or symplectic-orthogonal matrices, which are not elementary enough hence hard to be freely parameterized. \cite{flaschka1991analysis,mackey2003determinant} factor the $2d$-by-$2d$ symplectic matrices as the products of at most $4d$ symplectic transvections, which can freely parameterize the matrix symplectic group, nevertheless, may be unstable with a large $d$ in practice.

Theorem \ref{thm:fac_thm} proves that any symplectic matrix can be factored into no more than 9 unit triangular symplectic matrices. The core of the proof is Theorem \ref{thm:nonsin_thm}, which writes a symplectic matrix as the product of a symplectic matrix with nonsingular left upper block and a unit triangular symplectic matrix. Subsequently, we derive some important corollaries by Theorem \ref{thm:fac_thm} and Theorem \ref{thm:nonsin_thm}, such as, (\romannumeral1) the determinant of symplectic matrix is one, (\romannumeral2) the matrix symplectic group is path connected, (\romannumeral3) all the unit triangular symplectic matrices form a set of generators of the matrix symplectic group. Furthermore, this factorization yields effective methods for unconstrained parametrization of the matrix symplectic group and its structured subsets. With the unconstrained parametrization, we are able to apply the unconstrained optimization algorithms to the problems with symplectic constraints.

Although we proved that any symplectic matrix can be factored into 9 unit triangular symplectic matrices, it is still unknown that whether ``9'' is the optimal number in this factorization. What we know regarding to the optimal number is that it is indeed between 4 and 9. This problem is left for future work.

\section*{Acknowledgments}
This research is supported by the Major Project on New Generation of Artificial Intelligence from MOST of China (Grant No. 2018AAA0101002), and National Natural Science Foundation of China (Grant No. 11771438).

\bibliographystyle{abbrv}
\bibliography{references}

\end{document}